\newcommand{\R}{\mathbb{R}}
\newcommand{\N}{\mathbb N}
\newtheorem{thm}{Theorem}
\newtheorem{lemma}[thm]{Lemma}
\newtheorem{cor}[thm]{Corollary}
\newtheorem{prop}[thm]{Proposition}
\newtheorem*{thmi}{Theorem}
\newtheorem*{cori}{Corollary}
\theoremstyle{remark}
\newtheorem{example}[]{Example}
\newtheorem{remark}[]{Remark}
\newcommand{\be}{\begin{equation}}
\newcommand{\ee}{\end{equation}}
\title[Homotopy properties of endpoint maps]
{Homotopy properties of endpoint maps and a theorem of Serre in subriemannian geometry}\author{Francesco Boarotto}
\author{Antonio Lerario}
\begin{document}
\begin{abstract}We discuss homotopy properties of endpoint maps for affine control systems. We prove that these maps are Hurewicz fibrations with respect to some $W^{1,p}$ topology on the space of trajectories, for a certain $p>1$.  We study critical points of geometric costs for these affine control systems, proving that if the base manifold is compact then the number of their critical points is infinite (we use Lusternik-Schnirelmann category combined with the Hurewicz property). In the special case where the control system is \emph{subriemannian} this result can be read as the corresponding version of Serre's theorem, on the existence of infinitely many geodesics between two points on a compact riemannian manifold. In the subriemannian case we show that the Hurewicz property holds for all $p\geq1$ and the horizontal-loop space with the $W^{1,2}$ topology has the homotopy type of a CW-complex (as long as the endpoint map has at least one regular value); in particular the inclusion of the horizontal-loop space in the ordinary one is a homotopy equivalence.\end{abstract}
\maketitle

\section{Introduction}
In this paper we study homotopy properties of the set of those curves on a manifold $M$ whose velocities are constrained in a nonholonomic way (these curves are called \emph{horizontal}). The nonholonomic constraint is made explicit by requiring that the curves should be tangent to a totally nonintegrable distribution (for example a contact distribution, whose horizontal curves are called \emph{legendrian}). More generally we will allow \emph{affine} constraints, by considering a set of vector fields $\mathcal{F}=\{X_0, X_1, \ldots, X_d\}$ and defining a horizontal curve $\gamma:I=[0,1]\to M$  to be an \emph{absolutely continuous} curve (hence differentiable almost everywhere) solving the equation:
\be\label{eq:control} \dot\gamma=X_0(\gamma)+\sum_{i=1}^du_iX_i(\gamma), \quad \gamma(0)=x\ee
for functions $u_1, \ldots, u_d$ called \emph{controls} ($x\in M$ is a point that we fix from the very beginning). 

The vector field $X_0$ is special (it plays the role of a ``drift'') and in many interesting cases, like the subriemannian, it is assumed to be zero; the remaining vector fields satisfy the totally nonintegrable \emph{H\"ormander} condition: a finite number of their iterated brackets should span the whole tangent space $TM$ (this is also called the \emph{bracket generating} condition).

The regularity we impose on the controls determines the topology on the space $\Omega$ of all horizontal curves (called also \emph{trajectories}). In this paper we will assume $u=(u_1, \ldots, u_d)\in L^{p}(I, \R^d)$ for some $1< p<\infty$ (thus we consider the $W^{1,p}$ topology on the space of trajectories). The correspondence between a curve and its controls defines local coordinates on $\Omega$, which in turn becomes a Banach manifold modeled on $L^p=L^p(I, \R^d)$ (in fact this manifold is just the open subset of $L^p$ consisting of all controls whose corresponding trajectory is defined on the all interval $I$, see the Appendix of this paper or \cite{Montgomery} for more details); as a byproduct of this identification we will often replace a curve with the $d$-tuple of controls describing it in local coordinates. 

The \emph{endpoint map} is the map that associates to each trajectory its final point:
\be F:\Omega \to M\quad \gamma\mapsto \gamma(1).\ee
This map is differentiable (smooth in the $W^{1,2}$ case \cite{AgrachevBarilariBoscain}), and the set:
\be\Omega(y)=F^{-1}(y)\ee
with the induced topology coincides with the set of horizontal curves joining $x$ to $y$. 
 
In the riemannian case, these spaces are well understood and their topological properties are related to those of the manifold $M$ via the \emph{path fibration} (see \cite{BottTu, Hatcher}), which in our setting we discuss below. 

The uniform convergence topology  on $\Omega$ has been studied in \cite{Sarychev} and the $W^{1,1}$ in \cite{dynamic}. For the scopes of calculus of variations the case $W^{1,p}$ with $p>1$ is especially interesting as the analysis becomes more pleasant: for example the $p$-th power of the $L^p$ norm becomes a $C^1$ function and one can apply classical techniques from critical point theory to many problems of interest. Also, it is worth recalling that already in the subriemannian case not all topologies on $\Omega$ are equivalent a priori: for example in the $W^{1,\infty}$ case the so-called \emph{rigidity} phenomenon appear: some curves might be isolated (up to reparametrization) in the $W^{1,\infty}$ topology \cite{Bryant}.

The key property for studying the topology of horizontal path spaces is the homotopy lifting property for the endpoint map. Our first result generalizes the main results from \cite{dynamic, Sarychev}, proving that there exists $p_c>1$ (depending on $\mathcal{F}$) such that endpoint map is a \emph{Hurewicz fibration} for the $W^{1, p}$ topology for all $1\leq p<p_c$ (i.e. $F$ has the homotopy lifting property with respect to any space for these topologies).
\begin{figure}\scalebox{1} % Change this value to rescale the drawing.
{
\begin{pspicture}(0,-0.6280469)(14.02291,0.6280469)
\psline[linewidth=0.04cm](0.9210156,-0.030351562)(12.321015,-0.010351563)
\psdots[dotsize=0.14, fillstyle=solid,dotstyle=o](0.9210156,-0.030351562)
\psdots[dotsize=0.14](12.321015,-0.010351563)
\psdots[dotsize=0.14](8.921016,-0.010351563)
%\psdots[dotsize=0.14](3.5010157,-0.030351562)
\psdots[dotsize=0.14,fillstyle=solid,dotstyle=o](5.5410156,-0.030351562)
\usefont{T1}{ptm}{m}{n}
\rput(1.1024708,0.43464842){$W^{1,\infty}$}
\usefont{T1}{ptm}{m}{n}
%\rput(3.6424706,0.43464842){$W^{1,2}$}
\usefont{T1}{ptm}{m}{n}
\rput(5.702471,0.41464844){$W^{1,p}$}
\usefont{T1}{ptm}{m}{n}
\rput(12.402471,0.41464844){$L^{\infty}$}
\usefont{T1}{ptm}{m}{n}
\rput(12.31247,-0.40535155){$\textrm{(Sarychev)}$}
\usefont{T1}{ptm}{m}{n}
\rput(8.902471,-0.40535155){$\textrm{(Dominy and Rabitz)}$}
\usefont{T1}{ptm}{m}{n}
\rput(9.10247,0.43464842){$W^{1,1}$}
\usefont{T1}{ptm}{m}{n}
\rput(5.5224705,-0.38535157){$(p_c>1)$}
\end{pspicture} 
}
\caption{A picture of the continuous inclusions (from left to right) of the various $W^{1,p}([0,1])$ spaces.}\label{fig:inclusions} 
\end{figure}
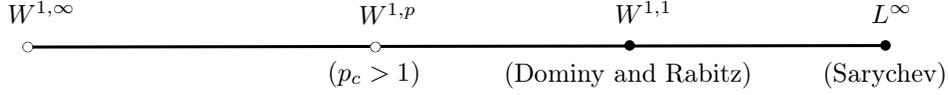
\begin{thmi}[The endpoint map is a Hurewicz fibration]\label{thmi:sarychev}There exists an interval $[1, p_c)\subseteq [1, \infty)$ (depending on $\mathcal{F}$), such that if $p\in [1, p_c)$ the Endpoint map $F:\Omega\to M$ is a Hurewicz fibration for the $W^{1,p}$ topology on $\Omega$. Moreover if $X_0=0$ then $p_c=\infty$. \end{thmi}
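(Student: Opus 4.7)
The plan is to prove the Hurewicz fibration property by exhibiting, for every $\bar\gamma\in\Omega$, a continuous local section of $F$ through $\bar\gamma$: an open neighborhood $U$ of $\bar y := F(\bar\gamma)$ in $M$ and a continuous map $\sigma:U\to\Omega$ with $F\circ\sigma=\mathrm{id}_U$ and $\sigma(\bar y)=\bar\gamma$, where continuity is with respect to the $W^{1,p}$ topology. The construction of such a $\sigma$, depending continuously also on $\bar\gamma$, gives a local trivialization $F^{-1}(U)\cong U\times F^{-1}(\bar y)$; since $M$ is paracompact, Dold's theorem then implies $F$ is a Hurewicz fibration. The problem is thus reduced to a purely local one around any horizontal trajectory.

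I would work in control coordinates, identifying $\Omega$ with an open subset of $L^p(I,\R^d)$. Given $\bar u$ corresponding to $\bar\gamma$, the candidate section is built by perturbing $\bar u$ only on a short terminal window $[1-\delta,1]$: set $\sigma(y)=\bar u\,\#_{1-\delta}\,(\bar u|_{[1-\delta,1]}+\eta(y))$ for a correction $\eta(y)\in L^p([1-\delta,1],\R^d)$ with $\eta(\bar y)=0$. The existence of $\eta(y)$ amounts to local surjectivity, near $\bar y$, of the tail endpoint map sending a terminal-interval control to the final point of the concatenated trajectory. This surjectivity follows from the bracket-generating condition through Chow's theorem, and a continuous selection $y\mapsto\eta(y)$ can be produced using the nilpotent approximation of $\mathcal F$ at the intermediate point $\bar\gamma(1-\delta)$.

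The admissible exponent range $[1,p_c)$ arises from quantitative $L^p$ estimates on $\eta(y)$. Moving the endpoint by a distance $\rho$ in an $n$-fold Lie-bracket direction requires a needle-type perturbation of $L^\infty$-amplitude of order $\rho^{1/n}$ supported on a subinterval of length of order $\rho^{1/n}$, hence of $L^p$-norm of order $\rho^{1/n+1/(np)}$. When $X_0=0$, the one-parameter symmetry $(t,u)\mapsto(t/\lambda,\lambda u)$ of the control system allows us to localize perturbations on arbitrarily short windows, so all $L^p$-norms remain under control and the construction works for every $p\in[1,\infty)$, giving $p_c=\infty$. With a nonzero drift this symmetry is broken and the contribution of the flow of $X_0$ on the perturbation window must be compensated by $\eta(y)$; a bookkeeping of the resulting corrections shows that $\eta(y)\to 0$ in $L^p$ only for $p$ in a nontrivial interval $[1,p_c)$, and $p_c>1$ follows by continuity in $p$ of the underlying estimates, which are valid at $p=1$ (recovering the Dominy--Rabitz result).

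I expect the main technical obstacle to be the continuity of $\eta(y)$ at $y=\bar y$ in the $L^p$ topology. The controls realizing motion in Lie-bracket directions are necessarily non-smooth, so the implicit function theorem cannot be applied to the full endpoint map directly; instead one must parametrize motion in each bracket direction explicitly using the nilpotent model and interpolate these parametrizations continuously in $y$, so that $\eta(y)$ remains a genuine $L^p$-correction shrinking as $y\to\bar y$. Once $\sigma$ is in hand, concatenating with $\bar u$ on $[0,1-\delta]$ yields the local trivialization, and Dold's theorem finishes the proof.
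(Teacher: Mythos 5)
Your local strategy --- realize nearby endpoints by Chow--type products of exponentials implemented by needle controls whose $L^p$ norms shrink as the displacement shrinks, and locate the critical exponent $p_c$ in the $L^p$ cost of moving in bracket directions --- is exactly the paper's (Proposition \ref{prop:cross}). The genuine gap is in how you assemble the local data into the Hurewicz property. A continuous family of local sections $\sigma_v:U\to\Omega$ through the points $v$ of a fiber does \emph{not} give a local trivialization $F^{-1}(U)\cong U\times F^{-1}(\bar y)$: the transport maps $v\mapsto\sigma_v(y)$ are far from surjective onto $F^{-1}(y)$ (their image consists only of controls agreeing with some $v$ on $[0,1-\delta]$ and differing by your specific correction $\eta$ on the tail), so you have no homeomorphism of $F^{-1}(U)$ with a product, and Dold's theorem in the locally-trivial form you invoke does not apply. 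The fix is to use the same data to verify the \emph{local homotopy lifting property} directly --- given $g:Z\times I\to U$ and $\tilde g_0$ with $F\circ\tilde g_0=g(\cdot,0)$, set $\tilde g(z,s)=\sigma_{\tilde g_0(z)}(g(z,s))$ --- and then apply Hurewicz's uniformization theorem (local HLP over an open cover of a paracompact base implies Hurewicz fibration). This is what the paper does; note that it requires the sections to be defined over a neighborhood that is uniform in the fiber point (the whole path $s\mapsto g(z,s)$ must lie in the domain of $\sigma_{\tilde g_0(z)}$), which is why the paper constructs a two-point cross-section $\hat\sigma(x,y)$ on $W\times W$ and attaches the connecting control by a (rescaled) concatenation, whose $L^p$-continuity is itself a nontrivial lemma (Proposition \ref{prop:rescaling}).

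Two quantitative points in your sketch also need repair. First, an amplitude of order $\rho^{1/n}$ on a window of length $\rho^{1/n}$ produces a flow parameter of order $\rho^{2/n}$, not $\rho^{1/n}$; and the rescaling $(t,u)\mapsto(t/\lambda,\lambda u)$ multiplies the $L^p$ norm by $\lambda^{1-1/p}$, so localizing perturbations on arbitrarily short windows makes $L^p$ norms \emph{larger} for $p>1$, not ``under control''. The mechanism that actually works (Lemma \ref{lemma:phi}) is to realize $e^{rX_i}$ by a constant control of amplitude $|r|^{1-\beta}$ on an interval of length $|r|^{\beta}$, whose $L^p$ norm is $|r|^{(\beta+p-\beta p)/p}$ and tends to $0$ precisely when $\beta<p/(p-1)$; since every finite $p$ admits such a $\beta$, one gets $p_c=\infty$ when $X_0=0$. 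Second, with drift your plan to ``compensate $X_0$ on the window by $\eta$'' is where the whole difficulty sits: the paper instead follows the explicit Dominy--Rabitz products $R^{\nu}$ with a parameter $\alpha>\sigma/2$ ($\sigma$ the step), which yields the concrete admissibility condition $p<2\alpha/(2\alpha-1)$ and hence the lower bound $p_c\ge\sigma/(\sigma-1)$, rather than a soft ``continuity in $p$'' argument from the $p=1$ case.
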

It is remarkable that the subriemannian case $(X_0=0)$ has the Hurewicz fibration property for all $1\leq p<\infty$, as in general if $X_0\neq 0$ the endpoint map can fail to have the homotopy lifting property for some finite $p<\infty$, as shown in the next example \cite{AgrachevLee}. 
\begin{example}\label{ex:AgrachevLee}Consider $M=\R^2$ with coordinates $(x_1, x_2)$ and:
\be X_0=x_1^2\partial_{x_2},\quad X_1=\partial_{x_1},\quad X_2=x_1^k\partial_{x_2}, \quad k\geq 3\ee
We consider on $\Omega$ the function $u\mapsto J(u)=\|u\|_2^2$, which is continuous for every $p\geq 2$ for the $W^{1,p}$ topology. Let us also consider the function $c_1:\R^2\times \R^2\to \R$:
\be c_1(x,y)=\inf\{J(\gamma)\in \Omega\,|\,\gamma(0)=x, \, \gamma(1)=y\}.\ee
In \cite[Proposition 2.1]{AgrachevLee} it is proved that there exists $K>0$ such that for all $w\in \R$ and all $z<0$:
\be c_1((0,w), (0, w))=0\quad \textrm{and}\quad c_1((0, w), (0, z))\geq K.\ee
Consider now the path $g_s=(0, -s)$ and let $u_0\in \Omega$ be a lift for $g_0$ (i.e. $F(u_0)=g_0$). Now this path (a homotopy of inclusions of a single point) cannot be lifted: an existence of such a lift would be a continuous path $u_s$ on $\Omega$ with $u_s\in \Omega(g_s)$, and in particular:
\be\lim_{s\to 0}J(u_s)=0,\ee
which contradicts the fact that $J|_{\Omega(g_s)}\geq K>0$ for all $s>0$.
\end{example}

Our proof of the previous theorem is much inspired from \cite{Sarychev, dynamic} and in fact consists in a simple (but important) modification of the proof from \cite{Sarychev}. This theorem has two immediate consequences for the $W^{1,p}$ topology ($1<p<p_c$): (i) all the spaces $\Omega(y)$ as $y$ varies on $M$ are homotopy equivalent; (ii) the inclusion of $\Omega(y)$ into the ordinary space of curves (with no nonholonomic constraints) is a weak homotopy equivalence. In particular this last property allows to adapt what is known for the topology of the standard loop space to our horizontal one. 

\begin{cori}[Some topological implications]\label{cori:lust}For every $k\in \mathbb{N}$ , every $1\leq p<p_c$ and every $y\in M$ the following isomorphism between homotopy groups holds for the $W^{1,p}$ topology:
\be \pi_k(\Omega(y))\simeq \pi_{k+1}(M).\ee
Moreover if the base manifold $M$ is compact and simply connected, then the Lusternik-Schnirelmann category of the space $\Omega(y)$ is infinite.
\end{cori}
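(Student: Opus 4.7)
The plan is to deduce the homotopy-group identity from the long exact sequence of the Hurewicz fibration $F:\Omega\to M$ supplied by the theorem, then to bootstrap this into a weak equivalence with the classical based loop space $\Omega M$, and finally to invoke Serre's theorem on the cohomology of loop spaces to bound the Lusternik--Schnirelmann category from below.

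Granted that $F:\Omega\to M$ is a Hurewicz fibration in the $W^{1,p}$ topology, the homotopy long exact sequence
\[
\cdots\to\pi_{k+1}(\Omega)\to\pi_{k+1}(M)\to\pi_k(\Omega(y))\to\pi_k(\Omega)\to\pi_k(M)\to\cdots
\]
reduces the first assertion to showing $\Omega$ is weakly contractible. I would produce an explicit deformation retraction via the reparametrization $H(u,s)(\tau):=s\,u(s\tau)$, which is continuous from $\Omega\times[0,1]$ to $\Omega$ (using the bound $\|s\,u(s\cdot)\|_{L^p}\leq s^{1-1/p}\|u\|_{L^p}$) and which sends $s=0$ to the zero control, whose associated trajectory (the constant path at $x$ when $X_0=0$, or more generally the integral curve of $X_0$ from $x$) is a fixed point of $H$. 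Once $\pi_{*}(\Omega)=0$, the sequence collapses to the claimed isomorphism $\pi_k(\Omega(y))\simeq\pi_{k+1}(M)$.

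To upgrade this homotopy-group identification into a weak equivalence that can be fed into Serre's theorem, I would compare with the standard $W^{1,p}$ path-endpoint Hurewicz fibration $E:P_x^{\mathrm{ord}}M\to M$: the natural inclusion $\Omega\hookrightarrow P_x^{\mathrm{ord}}M$ intertwines $F$ with $E$, and both total spaces are contractible (by the homotopy above, and by the classical reparametrization $\gamma(\tau)\mapsto\gamma(s\tau)$ in the unconstrained case). The five-lemma applied to the induced long exact sequences then shows that the inclusion of fibers $\Omega(y)\hookrightarrow\Omega_{\mathrm{ord}}(x,y)\simeq\Omega M$ induces isomorphisms on all homotopy groups, hence is a weak homotopy equivalence; in particular $H^{*}(\Omega(y))\cong H^{*}(\Omega M)$.

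For the Lusternik--Schnirelmann bound, when $M$ is compact, simply connected and non-contractible, $\pi_{*}(M)\otimes\mathbb{Q}$ necessarily contains an odd-degree class (otherwise the Sullivan minimal model would force $H^{*}(M;\mathbb{Q})$ to be an infinite-dimensional polynomial algebra, contradicting Poincar\'e duality), so $\pi_{*}(\Omega M)\otimes\mathbb{Q}$ contains an even-degree class and $H^{*}(\Omega M;\mathbb{Q})$ contains a polynomial subalgebra on an even generator. Therefore the cuplength of $\Omega M$ is infinite, and by the weak equivalence of the previous step so is that of $\Omega(y)$; since cuplength bounds Lusternik--Schnirelmann category from below, $\mathrm{cat}(\Omega(y))=\infty$. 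The main obstacle in this plan is the weak contractibility of $\Omega$ in the presence of a non-trivial drift: the naive reparametrization $u\mapsto s\,u(s\cdot)$ does not exactly preserve the equation $\dot\gamma=X_0(\gamma)+\sum u_iX_i(\gamma)$ because $X_0$ is not rescaled together with the controls, and a small correction to $H$ is needed to keep the deformation inside $\Omega$; once this is settled the remaining steps are standard.
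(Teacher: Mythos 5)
Your proposal is correct and follows essentially the same route as the paper: the long exact homotopy sequence of the Hurewicz fibration $F:\Omega\to M$ together with the (weak) contractibility of the total space gives $\pi_k(\Omega(y))\simeq\pi_{k+1}(M)$; the map of fibrations into the ordinary path fibration plus the five lemma gives the weak equivalence $\Omega(y)\hookrightarrow \Omega M$; and the infinite cup length of $\Omega M$ bounds the Lusternik--Schnirelmann category from below. Two points of divergence are worth noting. First, you supply an explicit contraction $H(u,s)=s\,u(s\cdot)$ of $\Omega$ and correctly flag that it does not respect the dynamics when $X_0\neq0$; the paper leaves the contractibility of $\Omega$ implicit, but the cleanest resolution is not to ``correct'' $H$ at the trajectory level at all: $\Omega$ is by definition an open subset of the Banach space $L^p(I,\R^d)$ of controls, and under the completeness hypotheses used throughout the paper's Appendix (vector fields with compact support, or $M$ compact) every control yields a trajectory defined on all of $I$, so $\Omega=L^p$ is convex and the contraction is the linear one $u\mapsto su$ --- no reparametrization of the trajectory is needed, since the homotopy lives in control space. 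Second, for the infinite cup length you give a self-contained rational homotopy argument (an odd-degree class in $\pi_*(M)\otimes\mathbb{Q}$ forced by Poincar\'e duality, hence a polynomial subalgebra of $H^*(\Omega M;\mathbb{Q})$ on an even generator), whereas the paper simply compares the $W^{1,p}$ ordinary loop space with the $W^{1,2}$ one and cites Schwartz and Serre for the infinite cup length of the latter. Your argument is more elementary in the sense of being self-contained, at the cost of invoking minimal models; both are standard and valid.
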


Once there is some information available for the topology of $\Omega(y)$, it can be used to study critical points of functionals, the classical example being the study of geodesics between two points. A celebrated theorem of Serre \cite{Serre} states that if a riemannian manifold $M$ is compact, then every two points are joined by infinitely many geodesics; the proof of this theorem essentially uses the topology of $\Omega(y)$ to force the existence of critical points of the Energy functional, which in the riemannian case are exactly geodesics. 

More generally one can study critical points of the $p$-Energy $J_p:u\mapsto \|u\|_p^p$ on $\Omega(y)$ for affine control systems on \emph{regular fibers} $\Omega(y)$: as long as $1<p<p_c$ this function is $C^1$ (Lemma \ref{lemma:C1}) and when restricted to $\Omega(y)$ it satisfies the Palais-Smale condition (Proposition \ref{prop:PS}). These two properties allow to use classical results to force the existence of critical points.

\begin{thmi}[On the critical points for the $p$-Energy]
	Let $y$ be a regular value for the endpoint map of the control system \eqref{eq:control}, $1<p<p_c$ and consider $f=J_p|_{\Omega(y)}$. If the base manifold $M$ is compact then $f$ has infinitely many critical points.
\end{thmi}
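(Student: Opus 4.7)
The plan is to apply Lusternik--Schnirelmann critical point theory to $f=J_p|_{\Omega(y)}$. Since $y$ is a regular value, $\Omega(y)$ is a $C^1$ Banach submanifold of $\Omega$; by Lemma \ref{lemma:C1} the restriction $f$ is $C^1$, it is bounded below by $0$, and by Proposition \ref{prop:PS} it satisfies the Palais--Smale condition. The classical Lusternik--Schnirelmann theorem then yields at least $\mathrm{cat}(\Omega(y))$ critical points, so it suffices to exhibit on $\Omega(y)$ either infinitely many connected components or infinite Lusternik--Schnirelmann category.

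I distinguish cases on $\pi_1(M)$. If $\pi_1(M)$ is infinite, Corollary \ref{cori:lust} with $k=0$ gives $\pi_0(\Omega(y))\simeq\pi_1(M)$, so $\Omega(y)$ has infinitely many path components $\{C_\alpha\}$. On each component $f$ remains $C^1$, non-negative, and Palais--Smale; a standard minimization argument (Ekeland's variational principle combined with PS) produces a critical point attaining $\inf_{C_\alpha}f$ on each $C_\alpha$, and this already yields infinitely many critical points of $f$.

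If $\pi_1(M)$ is finite, let $\pi\colon\tilde M\to M$ be the universal cover: it is a compact \emph{simply connected} manifold. Lifting $X_0,\ldots,X_d$ through the local diffeomorphism $\pi$ produces an affine control system on $\tilde M$ whose horizontal curves are precisely the $\pi$-lifts of the horizontal curves on $M$, and whose constant $p_c$ (being a local quantity) is preserved. Fix a lift $\tilde x$ of $x$; unique path-lifting identifies each connected component of $\Omega(y)$ with some fibre $\tilde\Omega(\tilde y)$, $\tilde y\in\pi^{-1}(y)$, in a manner that preserves controls and therefore the $p$-energy, and $\tilde y$ is regular for the lifted endpoint map because $\pi$ is a local diffeomorphism. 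Applying the simply connected part of Corollary \ref{cori:lust} to $\tilde M$ gives $\mathrm{cat}(\tilde\Omega(\tilde y))=\infty$, and the Lusternik--Schnirelmann theorem provides infinitely many critical points of $J_p|_{\tilde\Omega(\tilde y)}$, which descend to infinitely many critical points of $f$.

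The principal obstacle, to my mind, is the technical matching between the abstract LS machinery and the specific Banach--Finsler structure of $\Omega(y)$: the applicability of the LS theorem here rests on the $C^1$ regularity, on PS, and on a suitable completeness/properness condition on the ambient manifold, which is precisely what the preparatory lemmas are designed to establish. The covering-space step in the second case is conceptually routine, but the verification that lifting preserves both the regularity of $\tilde y$ and the value of $J_p$ is what makes the reduction to the simply connected situation legitimate.
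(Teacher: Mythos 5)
Your proposal is correct and follows essentially the same route as the paper: the trichotomy on $\pi_1(M)$, minimization on each of the infinitely many components when $\pi_1(M)$ is infinite, reduction to the compact simply connected universal cover when $\pi_1(M)$ is finite, and Lusternik--Schnirelmann theory combined with Corollary \ref{cor:lust}, Lemma \ref{lemma:C1} and Proposition \ref{prop:PS} in the simply connected case. The only cosmetic differences are that you invoke Ekeland's principle where the paper uses the deformation lemma to produce a critical point on each component, and that the paper verifies the descent of critical points along the covering explicitly via the Lagrange multiplier rule rather than by the control-preserving identification you describe.
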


As a corollary, we thus obtain a subriemannian version of the Serre's theorem: given $x$ and any regular point $y$ for the endpoint map on a compact subriemannian manifold there are infinitely many geodesics connecting them. In some cases (e.g. contact or fat distributions) the assumption of $y$ being a regular value may be dropped: in these situations there are no abnormal curves other than the trivial ones, and our arguments are essentially not affected.

It is clear at this point that deciding whether a given subriemannian manifold possesses or not at least a regular value for $F$ is a crucial problem and, to our knowledge, still an open question. If such a value exists, we can be more precise about the homtopy type of the fibers.

\begin{thmi}[The homotopy type of the fiber]
	Assume that the endpoint map has at least a regular value on $M$. Then any fiber $\Omega(y)$ endowed with the $W^{1,2}$ topology has the homotopy type of a $CW$-complex.
\end{thmi}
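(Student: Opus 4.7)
The plan is to reduce the problem to the fiber over the regular value, show that this particular fiber is a separable Hilbert manifold, and then invoke a classical result from infinite-dimensional topology. The Hurewicz fibration property from the previous theorem will transfer the conclusion from this preferred fiber to every other fiber.

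More precisely, let $y^{*}\in M$ be the regular value guaranteed by the hypothesis, and write $\Omega^{*}=\Omega(y^{*})$. The first step is to identify $\Omega^{*}$ as a smooth Hilbert submanifold of $\Omega$. In the $W^{1,2}$ case $\Omega$ is locally an open subset of the separable Hilbert space $L^{2}(I,\R^{d})$, and $F$ is smooth there by \cite{AgrachevBarilariBoscain}. Regularity of $y^{*}$ means that $dF_{u}$ is surjective for every $u\in\Omega^{*}$; since the target $T_{y^{*}}M$ is finite dimensional, the kernel $\ker dF_{u}$ is closed, of finite codimension, and therefore admits a finite-dimensional topological complement. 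The implicit function theorem for Banach spaces then yields smooth submanifold charts, so $\Omega^{*}$ is a smooth, separable Hilbert manifold.

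The next step is to deduce that $\Omega^{*}$ has the homotopy type of a $CW$-complex. For this I would invoke Palais' theorem that every separable infinite-dimensional Hilbert manifold has the homotopy type of a $CW$-complex; equivalently one may note that a metric Hilbert manifold is an absolute neighborhood retract and apply Milnor's theorem that every ANR has the homotopy type of a $CW$-complex. This is the essential external ingredient of the argument.

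Finally, the Hurewicz fibration property of $F$ with respect to the $W^{1,2}$ topology, supplied by the previous theorem (which applies here since $p_{c}=\infty$ in the subriemannian case), together with the connectedness of $M$, implies that all fibers of $F$ are homotopy equivalent to $\Omega^{*}$. Indeed, the standard path-lifting argument for Hurewicz fibrations shows that any path in $M$ from $y^{*}$ to $y$ lifts to a homotopy between the inclusion $\Omega^{*}\hookrightarrow\Omega$ and a map $\Omega^{*}\to\Omega(y)$, while the reversed path provides a homotopy inverse. Hence every fiber $\Omega(y)$ inherits the $CW$-homotopy type from $\Omega^{*}$. The main obstacle I anticipate is the clean application of Palais/Milnor in our specific Banach manifold setting, which essentially boils down to verifying separability and paracompactness of $\Omega^{*}$, both of which follow directly from the $L^{2}$-model.
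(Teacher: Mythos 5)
Your proposal is correct and follows essentially the same route as the paper: the fiber over the regular value is a Hilbert manifold and hence has the homotopy type of a CW-complex by Palais' theorem, and the Hurewicz fibration property of $F$ (Theorem \ref{thm:sarychev}) transfers this homotopy type to every other fiber. You merely spell out in more detail the submanifold structure via the implicit function theorem and the path-lifting argument, both of which the paper leaves implicit.
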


\subsection{Related work}The problem of understading the topology of the space of maps with some restrictions on their differential goes back to the works on immersions of S. Smale \cite{Smale1}, for the case of curves on a manifold the author considers spherical-type constraints on the velocities (i.e. immersions and regular homotopies). Hurewicz properties for endpoint maps of affine control systems were studied first by A. V.  Sarychev \cite{Sarychev} for the uniform convergence topology and by J. Dominy and H. Rabitz \cite{dynamic} for the $W^{1,1}$ topology. 
The quantitative study of the interaction between the topology of the horizontal loop space and the set of geodesics was initiated by the second author together with A. Agrachev and A. Gentile in \cite{AgrachevGentileLerario}. In the contact case a ``local'' version of Serre's theorem was investigated by the second author and L. Rizzi in \cite{LerarioRizzi} (the authors perform an asymptptic count of the number of geodesics between two point on a contractible contact manifold, using the relation between a subriemannian manifold and its nilpotent approximation).

\subsection{Structure of the paper}Section \ref{sec:preliminaries} is devoted to the proof of the Hurewicz fibration property (Theorem \ref{thm:sarychev}): the crucial ingredient is the construction of a continous cross-section for the endpoint map (Proposition \ref{prop:cross}). The topological implications are discussed in Section \ref{sec:implications}. In section \ref{sec:critical} we study critical points of geometric costs: the Palais-Smale property is proved in Proposition \ref{prop:PS} and applications via Lusternik-Schnirelmann method are discussed in Section \ref{sec:LSappli}. The subriemannian case is discussed in Section \ref{sec:subriemannian}. The Appendix contains some additional technical results, mostly known to experts.
\subsection*{Acknowledgements} We wish to thank E. Le Donne for bringing the problem to our attention, as well as A. A. Agrachev, D. Barilari and L. Rizzi for stimulating discussions. The second author also thanks M. Degiovanni for interesting discussions and for bringing his attention to critical points techniques in Banach spaces. Part of this research was done during the trimester ``Geometry, Analysis and Dynamics on Sub-Riemannian manifolds`` at IHP, Paris: we wish to thank the organizers of the trimester for the wonderful working atmosphere. The second author has received funding from the European Community’s Seventh Framework Programme ([FP7/2007-2013] [FP7/2007-2011]) under grant agreement no [258204].
\section{Homotopy properties of the endpoint map}
\subsection{Some preliminary results}\label{sec:preliminaries}
%Here we discuss a quantitative proof of Sarychev's theorem \cite{Sarychev}, which establishes the Hurewicz property for the $L^\infty$-topology on $\Omega$. The ``quantitative'' refers actually to the work of Dominy and Rabitz \cite{dynamic}, where the Hurewicz property is proved for the $W^{1,1}$-topology: working out the necessary modifications from \cite{dynamic}, their proof can be extended to the $W^{1,p}$-topology for a certain $p>1$ (which depends on the step of the distribution in the affine case). In the case $X_0=0$, the proof can be modified to obtain the Hurewicz property for all $p\in [1, \infty).$ We need some preliminaries.

\begin{lemma}\label{lemma:phi}Let $0<\beta<\frac{p}{p-1}$ and  for every $j=1, \ldots, N$ define the map $\rho_j:\R^N\to L^{p}([0, \infty))$ by:
%\be (r_1, \ldots, r_N)\mapsto \left(\sum_{j=1}^N\rho_j(r),\sum_{j=1}^N|r_j|^\beta \right),\ee
 $\rho_j(r)=0$ if $r_j=0$ and  $\rho_j(r)=\chi_{j}r_j|r_j|^{-\beta}$ otherwise ($\chi_j$ is the characteristic function of the interval $[|r_{j-1}|^{\beta}, |r_{j-1}|^\beta+|r_{j}|^\beta]$ and $r_0=0$). Then the map $\rho_j$ is continuous. \end{lemma}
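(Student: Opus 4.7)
My plan is to verify continuity at a point $r \in \mathbb{R}^N$ by splitting into two cases according to whether $r_j = 0$ or $r_j \neq 0$. The key preliminary calculation is the explicit value of the $L^p$ norm: since $\rho_j(r)$ equals the constant $\operatorname{sign}(r_j)|r_j|^{1-\beta}$ on an interval of length $|r_j|^\beta$ and vanishes elsewhere,
\begin{equation}
\|\rho_j(r)\|_p^p \;=\; |r_j|^{p(1-\beta)} \cdot |r_j|^{\beta} \;=\; |r_j|^{p-\beta(p-1)}.
\end{equation}
The hypothesis $\beta < p/(p-1)$ is exactly what makes the exponent $p-\beta(p-1)$ strictly positive, so this quantity is a continuous function of $r$ that vanishes precisely when $r_j = 0$.

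\textbf{Case 1: $r_j = 0$.} Here $\rho_j(r) = 0$ by definition, and from the formula above $\|\rho_j(r^{(n)})\|_p = |r_j^{(n)}|^{(p-\beta(p-1))/p} \to 0$ whenever $r^{(n)}\to r$. So continuity follows immediately.

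\textbf{Case 2: $r_j \neq 0$.} For $n$ large enough $r_j^{(n)} \neq 0$, and I would write $\rho_j(r^{(n)}) = s_n \chi_{[a_n, b_n]}$ and $\rho_j(r) = s\, \chi_{[a,b]}$ with
\begin{equation}
s_n = \operatorname{sign}(r_j^{(n)})|r_j^{(n)}|^{1-\beta}, \qquad a_n = |r_{j-1}^{(n)}|^\beta, \qquad b_n = a_n + |r_j^{(n)}|^\beta,
\end{equation}
and the analogous unsubscripted quantities for $r$. Since $r_j \neq 0$ and $\beta > 0$, the maps $r \mapsto s$, $r \mapsto a$, $r\mapsto b$ are all continuous at $r$ (the potential singularity $r_{j-1}=0$ is harmless because $|\cdot|^\beta$ is continuous), so $s_n \to s$, $a_n \to a$, $b_n \to b$. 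Then I would estimate, by the triangle inequality,
\begin{equation}
\|\rho_j(r^{(n)}) - \rho_j(r)\|_p \;\leq\; |s_n - s|\,\|\chi_{[a_n,b_n]}\|_p + |s|\,\|\chi_{[a_n,b_n]} - \chi_{[a,b]}\|_p,
\end{equation}
and the two factors $\|\chi_{[a_n,b_n]}\|_p = (b_n - a_n)^{1/p}$ and $\|\chi_{[a_n,b_n]}-\chi_{[a,b]}\|_p \leq (|a_n - a| + |b_n - b|)^{1/p}$ are controlled in an elementary way.

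There is no real obstacle here, only bookkeeping: the content of the lemma is simply that the exponent condition $\beta(p-1)<p$ has been chosen so that the $L^p$-mass $|r_j|^{p-\beta(p-1)}$ degenerates continuously as the support of $\rho_j(r)$ collapses. The mildly delicate point is verifying continuity at points where the \emph{previous} coordinate $r_{j-1}$ vanishes (so that the left endpoint of the supporting interval has a $|\cdot|^\beta$ singularity), but this is handled directly since $\beta > 0$ makes $r_{j-1} \mapsto |r_{j-1}|^\beta$ continuous everywhere.
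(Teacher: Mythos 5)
Your proof is correct and rests on the same key computation as the paper's: the $L^p$-norm $\|\rho_j(r)\|_p=|r_j|^{(\beta+p-\beta p)/p}$ tends to zero as $r_j\to 0$ precisely because $\beta<\frac{p}{p-1}$. The paper only verifies this continuity at points with $r_j=0$ and treats the case $r_j\neq 0$ as routine; your Case 2 simply spells out that bookkeeping explicitly.
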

\begin{proof}
The only needed verification is continuity at zero:
\begin{align} \lim_{r_j\to 0}\|\chi_{j}r_j|r_j|^{-\beta}\|_p&=\lim_{r_j\to 0}\left(\int_{|r_{j-1}|^\beta}^{|r_{j-1}|^\beta+|r_j|^\beta}\left|r_j|r_j|^{-\beta}\right|^pdt\right)^{1/p}\\&=\lim_{r_j\to 0}|r_j|^{\frac{\beta+p-\beta p}{p}}=0
 \end{align}since $\beta+p-\beta p>0.$
\end{proof}

%For $v\in V$ denote by:
%\be T(v)=\inf_T\{\textrm{$v_i|_{(T, \infty)}=0$ for all $i=1, \ldots , d$}\}\ee
\begin{prop}[The cross-section]\label{prop:cross}Given the manifold $M$ and the family of vector fields $\mathcal{F}$, there exists an interval $[1, p_c)\subset [1, \infty)$ such that for every $1\leq p<p_c$ every point in $M$ has a neighborhood $W$ and a continuous map:
\begin{align}\hat{\sigma}: W\times W&\to L^{p}([0, \infty), \R^d)\times \R \\
(x,y)&\mapsto (\sigma(x,y), T(x,y)) \end{align}
such that  
%(denoting by $F_x^{T(x,y)}$ the time-$T(x,y)$  Endpoint map centered at $x$) 
$F^{T(x,y)}_x(\sigma(x,y))=y$ and $\hat{\sigma}(x,x)=(0,0)$ for every $x,y\in W$. Moreover, if $X_0=0$ then $p_c=\infty$.\end{prop}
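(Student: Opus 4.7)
The plan is to build the cross-section on a small neighborhood of a chosen point $\bar x\in M$ by concatenating $n=\dim M$ elementary maneuvers, one per direction of a chosen basis of $T_{\bar x}M$, each of them modelled on Lemma~\ref{lemma:phi}; the inversion of the resulting finite-dimensional endpoint map is then performed topologically. Lemma~\ref{lemma:phi} supplies exactly the ingredient needed to guarantee continuity of $\hat\sigma$ at the diagonal $x=y$: a scalar-parameter family of controls whose $L^p$ norm vanishes continuously as the parameter goes to zero.

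Concretely, I would use the bracket-generating condition to select iterated Lie brackets $Y_1,\ldots,Y_n$ of $X_1,\ldots,X_d$, of respective steps $s_1,\ldots,s_n$, whose values at $\bar x$ form a basis of $T_{\bar x}M$. To each $Y_j$ there corresponds a word $\omega_j$ in the letters $\pm 1,\ldots,\pm d$ such that concatenating the associated $\pm X_i$-flows, each for time $\tau$, displaces $\bar x$ by $\tau^{s_j}Y_j(\bar x)+o(\tau^{s_j})$. Rescaling $\tau=|r_j|^{\beta}$ and turning the sequence of flows into a single piecewise-constant control via Lemma~\ref{lemma:phi} produces a control $u^{(r)}\in L^p([0,\infty),\R^d)$ and a total duration $T^{(r)}$ that depend continuously on $r\in\R^n$, with $u^{(0)}=0$ and $T^{(0)}=0$, provided $\beta<p/(p-1)$. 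The associated endpoint map $E(x,r):=F_x^{T^{(r)}}(u^{(r)})$ is continuous in $(x,r)$ and expands as $E(\bar x,r)=\bar x+\sum_j r_j Y_j(\bar x)+o(|r|)$ by the bracket formula; since $\{Y_j(\bar x)\}$ is a basis, invariance of domain shows that $r\mapsto E(x,r)$ is a local homeomorphism near $r=0$ uniformly in $x$ close to $\bar x$. Inverting it continuously in $(x,y)$ yields $r(x,y)$, and setting $\sigma(x,y):=u^{(r(x,y))}$ and $T(x,y):=T^{(r(x,y))}$ delivers the desired section, extended by zero past $T(x,y)$ so as to lie in $L^p([0,\infty),\R^d)$.

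Two ingredients determine $p_c$. Lemma~\ref{lemma:phi} forces $\beta<p/(p-1)$ from above; on the other hand, for the bracket expansion of $E$ to remain valid (in particular for the drift contribution to stay inside the $o(|r|)$ remainder) $\beta$ must be kept above a positive threshold depending on $\mathcal{F}$. When $X_0=0$ all the elementary flows are reversible, so $\beta$ can be taken arbitrarily small and any $p\in[1,\infty)$ is admissible, giving $p_c=\infty$. When $X_0\neq 0$ the drift injects a parasitic displacement of order $|r_j|^\beta$ along each maneuver, which cannot be cancelled and forces $\beta$ above a positive number, so $p$ must lie strictly below some finite $p_c>1$. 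The main obstacle I expect is precisely this balancing, combined with the fact that $r\mapsto u^{(r)}$ is only continuous at $r=0$: the implicit function theorem is unavailable there, and the continuous inversion must be carried out by topological means (invariance of domain or a Brouwer-type argument), propagating continuity from the finite-dimensional picture back into the infinite-dimensional $L^p$ space.
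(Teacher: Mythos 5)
Your overall strategy coincides with the paper's (which follows Sarychev and Dominy--Rabitz): use the bracket-generating condition to produce a finite concatenation of elementary $\pm X_i$-flows realizing a basis of directions, convert the concatenation into an $L^p([0,\infty))$ control by the reparametrization of Lemma~\ref{lemma:phi} (whence the constraint $\beta<p/(p-1)$ and the continuity of $\hat\sigma$ at the diagonal), and handle the drift by a separate lower bound on the time-scaling exponent, which caps $p$ and yields $p_c\geq \sigma/(\sigma-1)$ while $p_c=\infty$ when $X_0=0$. One clarification on the role of $\beta$: in Lemma~\ref{lemma:phi} the control has amplitude $r_j|r_j|^{-\beta}$ on an interval of length $|r_j|^{\beta}$, so its integral is exactly $r_j$ and the flow $e^{r_jX}$ is realized \emph{exactly}; $\beta$ only reparametrizes time and never enters the bracket expansion, which is why in the driftless case there is no geometric lower bound on $\beta$ and any $p<\infty$ works.

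The genuine gap is the inversion step. The map $r\mapsto E(x,r)$ you construct is merely continuous at $r=0$ (its parameters enter through $\nu$-th roots), and a first-order expansion $E(\bar x,r)=\bar x+\sum_j r_jY_j(\bar x)+o(|r|)$ of a map that is not $C^1$ yields neither local injectivity --- so invariance of domain does not apply; already in one variable $r\mapsto r+|r|^{3/2}\sin(1/r)$ equals $r+o(r)$ but is not injective near $0$ --- nor a continuous inverse via a Brouwer-type argument: nonvanishing degree gives surjectivity of $r\mapsto E(x,r)$ onto a neighborhood of $x$, i.e.\ existence of \emph{some} preimage, but not a continuous single-valued selection $r(x,y)$, which is exactly what the cross-section requires. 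The paper sidesteps this entirely by importing the continuous map $\phi(x,y)$ from the cited construction (Jean; Dominy--Rabitz, Lemma~1), where the composed flow map becomes $C^1$ with invertible differential after the power substitution $\xi_k=\pm r_k^{\nu_k}$; the classical inverse function theorem then applies in the $\xi$ variables, and $\phi_k(x,y)$ is recovered as the $\nu_k$-th root of a $C^1$ function, hence continuous and vanishing on the diagonal. To make your argument close you need this substitution (or an equivalent quantitative injectivity estimate for $E(x,\cdot)$ near $r=0$); the purely topological inversion as proposed does not produce the section.
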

\begin{proof}
We first work out the case $X_0=0$ and $p>1$ (the case $p=1$ and $X_0=0$ is a special case of \cite[Lemma 1]{dynamic}, whose notation we follow closely). Given the vector fields $\{Y_1, \ldots, Y_k\}$ define inductively  $Q^1(Y_1)=e^{Y_1}$ and:
\be Q^{\nu}(Y_1, \ldots, Y_\nu)=e^{Y_\nu}\circ Q^{\nu-1}(Y_1, \ldots, Y_{\nu-1})\circ e^{-Y_\nu}\circ(Q^{\nu-1}(Y_1, \ldots, Y_{\nu-1}))^{-1},\quad \nu\geq1.\ee
Given a real number $r$ we define also:
\be P^{\nu}(Y_1, \ldots, Y_{\nu}, r)=Q^{\nu}(rY_1, \ldots, rY_\nu).\ee
It follows from the Baker-Campbell-Hausdorff formula that, for $r$ sufficiently small, \be P^{\nu}(Y_1, \ldots, Y_{\nu}, r^{1/\nu})=e^{r\,\textrm{ad}Y_{\nu}\dotso\textrm{ad}Y_2Y_1+\textrm{higher order terms in}\,r}.\ee
Then the bracket generating condition on $\mathcal{F}$ implies that (see \cite[Section 2.1]{Jean} or the proof of \cite[Lemma 1]{dynamic}) every point in $M$ has a neighborhood $W$ and a continuous\footnote{The $k$-th component of $\phi=(\phi_1, \ldots, \phi_n)$ is the $\nu$-th root of a $C^1$ function.} map $\phi:W\times W\to \R^n$ such that $\phi(x,x)=0$ for all $x\in W$ and:
\be\label{eq:prod} \left(\prod_{k=1}^{n} P^{\nu_k}( X_{k_1}, \ldots, X_{k_{\nu_k}}, \phi_k(x,y))  \right)(x)=y \quad \forall x,y\in W.\ee
Now we notice that the product in \eqref{eq:prod} can be written as:
 \be\left(\prod_{k=1}^{n} P^{\nu_k}(X_{k_1}, \ldots, X_{k_{\nu_k}}, \phi_k(x,y)) \right)=\prod_{j=1}^Ne^{\phi_{a_j}(x,y) X_{b_j}} \ee
 where $N$ is a given number and $a_j, b_j\in \{1, \ldots, d\}$ for $j=1, \ldots, N$ (these numbers are fixed and depend on the neighborhood $W$ only).
 
 Given $p>1$ choose $\beta$ satisfying the hypothesis of Lemma \ref{lemma:phi}. Using the notation of Lemma \ref{lemma:phi}  we can now interpret $y=(\prod_{j=1}^Ne^{\phi_{k_j}(x,y) X_{k_j}})(x)$ as the solution at time:
 \be T(x,y)=\sum_{j=1}^N|\phi_{k_j}(x,y)|^\beta\ee
 of the control problem with initial datum $y(0)=x$ and control:
 \be \sigma(x,y)=\left(\sum_{\{j\,|\,k_j=1\}} \rho_j(\phi_{k_j}(x,y)), \sum_{\{j\,|\,k_j=2\}} \rho_j(\phi_{k_j}(x,y)), \ldots,\sum_{\{j\,|\,k_j=d\}} \rho_j(\phi_{k_j}(x,y))\right). \ee
 By Lemma \ref{lemma:phi} it follows that the map $\hat{\sigma}=(\sigma, T)$ defined in this way is continuous: each component is the sum of compositions of continuous functions ($T(x,y)$ is continuous since $\beta>0$) and $\hat{\sigma}(x,x)=(0,0).$
 
For the case $X_0\neq 0$ we notice that the proof of \cite[Lemma 1]{dynamic} produces indeed the continuity of the cross section for some $1<p<p_c$ (as we will see, a lower bound for $p_c$ in this case is given by $\sigma/(\sigma-1)$, where $\sigma$ is the step of the distribution $\mathcal{F}$). We simply check the needed details. The sequence of exponentials \eqref{eq:prod} now has to be replaced with \cite[Equation 6.a]{dynamic} (using the same notation as the mentioned paper):
\be\label{eq:proddyn} \left(\prod_{k=1}^{n} R^{\nu_k}(X_0,  X_{k_1}, \ldots, X_{k_{\nu_k}}, \pm \phi_{k_j}(x,y), \phi_{k_j}(x,y), \ldots, \phi_{k_j}(x,y))\right).\ee
 The construction in \cite{dynamic} works in such a way that given $\alpha>\nu_k/2$, using BCH formula, $R^{\nu_k}$ can be written as the exponential of a series of terms from $\{\phi_{k_1}^{2\alpha}X_0, \ldots, \phi_{k_\nu}^{2\alpha}X_0,\phi_{k_1} X_{k_1}, \ldots, \phi_{k_\nu} X_{k_\nu}\}$ and their Lie brackets. We choose thus $\alpha>\sigma/2$ which guarantees $\alpha> \nu_k/2$ for all $k=1, \ldots, n.$ The product in \eqref{eq:proddyn} can thus be regarded as the solution at time $T=\sum_j \nu_k\phi_{k_j}^{2\alpha}$ of a control problem with initial datum $y(0)=x$ and locally constant controls $\sigma=(\sigma_1, \ldots, \sigma_d)$ taking values on an interval of length $\phi_{k_j}^{2\alpha}$. The continuity of the final time $T$ follows from the fact that $\alpha>0$; for the continuity of the corresponding $\sigma$ we argue as in \cite[Appendix C]{dynamic}. Each component of $\sigma$ is the concatenation of some fixed number of locally constant controls (some of them can possibly be zero) each one defined on an interval of length $\phi_{k_j}^{2\alpha}$ and taking a value proportional to $\phi_{k_j}^{1-2\alpha}$. Then it is enough to check the continuity of this control at zero for the $L^p$-topology. If we choose $p<\frac{2\alpha}{2\alpha-1}$ then:
 \be \lim_{\phi_{k_j}\to0}\int_{c}^{c+\phi_{k_j}^{2\alpha}} \left|\phi_{k_j}^{1-2\alpha}\right|^pdt=\lim_{\phi_{k_j}\to0}\phi_{k_j}^{2\alpha+p-2p\alpha}=0. \ee 
 (Notice in particular that, because of the way we chose $\alpha$, a lower bound for $p_c$ is given by $\sigma/(\sigma-1)$.)
 \end{proof}
 
		\begin{prop}[Rescaled concatenation]\label{prop:rescaling}Let $p\in [1,\infty)$, then the map $\mathcal C: L^p(I)\times L^p([0,+\infty))\times \R\to L^p(I)$ defined below is continuous:
		\[
		\mathcal C(u,v,T)(t)=\left\{
		\begin{alignedat}{9}
		& (T+1)u(t(T+1))\quad&& 0\leq t< \frac{1}{T+1}\\
		& (T+1)v((T+1)t-1),\quad&& \frac{1}{T+1}< t\leq 1.
		\end{alignedat}
		\right.
		\]	
	Moreover (extending the definition componentwise to controls with value in $\R^d$) we also have $F_x^{1+T}(u*v)=F_x^1(\mathcal{C}(u,v,t))$ for every $x\in M$ (here $u*v$ denotes the usual concatenation).
	 \end{prop}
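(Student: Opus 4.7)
The proposition contains two assertions: continuity of $\mathcal{C}$ and the endpoint identity; I would prove them separately.

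For the continuity, I would decompose $\mathcal{C}(u,v,T) = \mathcal{C}_1(u,T) + \mathcal{C}_2(v,T)$, where each summand is the restriction of the defining formula to one of the two subintervals (extended by zero outside). By the triangle inequality it suffices to show $\mathcal{C}_1$ and $\mathcal{C}_2$ are each continuous on $L^p(I)\times \R$ and $L^p([0,\infty))\times \R$ respectively. The change of variable $s=(T+1)t$ gives
\be
\|\mathcal{C}_1(u,T)\|_{L^p(I)}^p = (T+1)^{p-1}\|u\|_{L^p(I)}^p, \qquad \|\mathcal{C}_2(v,T)\|_{L^p(I)}^p = (T+1)^{p-1}\|v\,\chi_{[0,T]}\|_{L^p([0,\infty))}^p,
\ee
so on any compact range of $T$-values the operators $u\mapsto \mathcal{C}_1(u,T)$ and $v\mapsto \mathcal{C}_2(v,T)$ are uniformly bounded and linear in their first argument.

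To promote these estimates to joint continuity at $(u,v,T)$, given $(u_n,v_n,T_n)\to(u,v,T)$ I would use
\be
\|\mathcal{C}_1(u_n,T_n) - \mathcal{C}_1(u,T)\|_p \leq \|\mathcal{C}_1(u_n-u,T_n)\|_p + \|\mathcal{C}_1(u,T_n) - \mathcal{C}_1(u,T)\|_p,
\ee
and analogously for $\mathcal{C}_2$. The first term on the right vanishes by the operator bound. For the second, when $u \in C_c((0,1))$ the two functions $\mathcal{C}_1(u,T_n)$ and $\mathcal{C}_1(u,T)$ are uniformly bounded in $L^\infty$ and converge pointwise a.e., so dominated convergence yields $L^p$ convergence; the case of general $u\in L^p(I)$ then follows from the density of $C_c((0,1))$ in $L^p(I)$ combined with the uniform operator estimate. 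The same strategy applies to $\mathcal{C}_2$, approximating $v$ by continuous compactly supported functions.

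For the endpoint identity I would set $\tilde\gamma(t) = F_x^{(1+T)t}(u*v)$ for $t\in I$ and verify by the chain rule that $\tilde\gamma$ solves the Cauchy problem associated with the control $\mathcal{C}(u,v,T)$ starting from $x$; evaluating at $t=1$ yields $F_x^{1}(\mathcal{C}(u,v,T)) = \tilde\gamma(1) = F_x^{1+T}(u*v)$.

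The principal obstacle I expect is the middle step: because the partition point $1/(T+1)$ depends on $T$, the continuity in $T$ cannot be read off from a pure operator-theoretic bound, and one genuinely needs the density-plus-dominated-convergence argument to absorb the shifting of the interval.
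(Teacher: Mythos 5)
Your continuity argument is correct and rests on exactly the same three ingredients as the paper's proof: the scaling identity $\|\mathcal C_1(u,T)\|_p^p=(T+1)^{p-1}\|u\|_p^p$ (which the paper uses in the form $|T_k+1|^{p-1}\|u_k-g\|_p^p$), density of continuous compactly supported functions, and a convergence statement for such nice functions. The packaging, however, is genuinely different. The paper fixes a sequence $(u_k,v_k,T_k)$, splits into monotone subsequences according to whether $T_k\geq T$ or $T_k\leq T$, and decomposes the $L^p$ norm over the \emph{three} intervals $[0,1/(T_k+1)]$, $[1/(T_k+1),1/(T+1)]$, $[1/(T+1),1]$; the shifting of the partition point is then handled by an explicit middle integral, estimated by dominated convergence, and the convergence for the smooth approximant $g$ is obtained from its uniform continuity. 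Your decomposition $\mathcal C=\mathcal C_1+\mathcal C_2$ into two operators with (a.e.) disjoint supports absorbs the middle region automatically: the moving endpoint $1/(T_n+1)$ only enters through the pointwise a.e.\ convergence of $\mathcal C_1(g,T_n)$ to $\mathcal C_1(g,T)$, which together with the uniform $L^\infty$ bound gives $L^p$ convergence by dominated convergence. This buys you a cleaner argument with no case analysis on the monotonicity of $T_n$ and a reusable statement (joint continuity of a family of uniformly bounded linear operators in a parameter), at the cost of nothing; your closing remark correctly identifies where the real content lies.

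One caveat on the ``moreover'' clause, which the paper does not prove at all. Your chain-rule verification gives, for $\tilde\gamma(t)=\gamma_{u*v}((1+T)t)$,
\begin{equation}
\dot{\tilde\gamma}(t)=(1+T)\,X_0(\tilde\gamma(t))+\sum_{i=1}^d\mathcal C_i(u,v,T)(t)\,X_i(\tilde\gamma(t)),
\end{equation}
so $\tilde\gamma$ is the trajectory of the control $\mathcal C(u,v,T)$ for the system \eqref{eq:control} only when $X_0=0$ (or $T=0$): the drift term does not rescale with time. In the driftless case your verification closes immediately; in the affine case the identity $F_x^{1+T}(u*v)=F_x^1(\mathcal C(u,v,T))$ as stated requires either a reinterpretation of $F_x^T$ or an additional argument, and your proposed proof would surface exactly this obstruction.
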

	\begin{proof}
		As $L^p(I)\times L^p([0,+\infty))\times \R$ is a metric space, it is sufficient to prove that if $(u_k,v_k,T_k)\to (u,v,T)$, then $\|\mathcal C(u_k,v_k,T_k)-\mathcal C(u,v,T)\|_p\to 0$.
		
		Assume for simplicity that $T_k\geq T$ (we can split the sequence $\{T_k\}_{k\in \mathbb{N}}$ into two monotone subsequences and work the case $T_k\leq T$ separately, it is completely analogous). Start with:
		\begin{align}
		\label{eqn:1c}
		  &\|\mathcal C(u_k,v_k,T_k)-\mathcal C(u,v,T)\|^p_p\\
		  &=\int_{0}^{1/(T_k+1)}|(T_k+1)u_k(t(T_k+1))-(T+1)u(t(T+1))|^pdt\\
		  &+\int_{1/(T_k+1)}^{1/(T+1)}|(T_k+1)v_k(t(T_k+1)-1)-(T+1)u(t(T+1))|^pdt\\
		  &+\int_{1/(1+T)}^1|(T_k+1)v_k(t(T_k+1)-1)-(T+1)v(t(T+1)-1)|^pdt.
		\end{align}
	
		Fix $\varepsilon>0$ and let $g$ be a smooth function compactly supported on $[0,3/2)$ such that $\|g-u\|_p\leq \varepsilon$. Observe that for $k$ sufficiently large we have $\|u_k-g\|_p\leq \|u-u_k\|_p+\varepsilon\leq 2\varepsilon$. 
		We can bound the first integral in \eqref{eqn:1c} as:
		\begin{alignat}{9}
		\label{eqn:2c}
		  &\int_{0}^{1/(T_k+1)}&&|(T_k+1)u_k(t(T_k+1))-(T+1)u(t(T+1))|^pdt\\&\leq 2^{2(p-1)}\bigg(&&\int_0^{1/(T_k+1)}|(T_k+1)u_k(t(T_k+1))-(T_k+1)g(t(T_k+1))|^pdt+\\
		  &&&\int_0^{1/(T_k+1)}|(T_k+1)g(t(T_k+1))-(T+1)g(t(T+1))|^pdt+\\
		  &&&\int_0^{1/(T_k+1)}|(T+1)g(t(T+1))-(T+1)u(t(T+1))|^pdt\bigg)\\
		  &\leq 2^{2(p-1)}\bigg(&&|T_k+1|^{p-1}\|u_k-g\|_p^p+|T+1|^{p-1}\|u-g\|_p^p+\\
		  &&&\int_0^{1/(T_k+1)}|(T_k+1)g(t(T_k+1))-(T+1)g(t(T+1))|^pdt\bigg).
		\end{alignat}
		Since $g$ is uniformly continuous in $[0,1]$, the last integral in \eqref{eqn:2c} can also be made as small as we wish as $k\to\infty$ as it is evident from:
		\begin{alignat}{9}
		\label{eqn:3c}
		 &\int_0^{1/(T_k+1)}&&|(T_k+1)g(t(T_k+1))-(T+1)g(t(T+1))|^pdt\\
		 &\leq 2^{p-1}\bigg(&&\int_0^{1/(T_k+1)}|(T_k+1)g(t(T_k+1))-(T_k+1)g(t(T+1))|^p+\\
		 &&&\int_0^{1/(T_k+1)}|(T_k+1)g(t(T+1))-(T+1)g(t(T+1))|^p\bigg).
		\end{alignat}
		
		The third integral in \eqref{eqn:1c} is formally the same as the one just handled; a similar reasoning proves that it goes to zero as $k\to\infty$. We are left to deal with the middle one. In this case as $k\to \infty$ by the dominated convergence theorem we have both
		\[
		  \int_{1/(T_k+1)}^{1/(T+1)}|v_k(t(T_k+1)-1)|^pdt=|T_k+1|^{p-1}\int_0^{(T_k+1)/(T+1)}|v(z)|^pdz\to 0
		\]
		and
		\[
		  \int_{1/(T_k+1)}^{1/(T+1)}|(T+1)u(t(T+1))|^pdt=|T+1|^{p-1}\int_{(T+1)/(T_k+1)}^1|u(z)|^pdz\to 0.
		\]
	Finally this yields:
		\begin{align}
		\label{eqn:4c}
		&\int_{1/(T_k+1)}^{1/(T+1)}|(T_k+1)v_k(t(T_k+1)-1)-(T+1)u(t(T+1))|^pdt\\
		&\leq2^{p-1}\left(|T_k+1|^{p-1}\int_0^{(T_k-t)/(T+1)}|v(z)|^pdz+|T+1|^{p-1}\int_{(T+1)/(T_k+1)}^1|u(z)|^pdz\right),
		\end{align}
		and with this we can eventually conclude that:
		\be\lim_{k\to\infty}\|\mathcal C(u_k,v_k,T_k)-\mathcal C(u,v,T)\|^p_p=0.\ee
		\end{proof}

\subsection{The Hurewicz fibration property and its consequences}\label{sec:implications}

\begin{thm}\label{thm:sarychev}There exists an interval\footnote{Depending on $(M, X_0, X_1, \ldots, X_d)$.} $[1, p_c)\subseteq [1, \infty)$, such that if $p\in [1, p_c)$ the Endpoint map $F:\Omega\to M$ is a Hurewicz fibration for the $W^{1,p}$ topology on $\Omega$. Moreover if $X_0=0$ then $p_c=\infty$. \end{thm}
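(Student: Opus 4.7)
The plan is to reduce the Hurewicz fibration property to a local statement and then assemble a local lift from the cross-section of Proposition~\ref{prop:cross} together with the continuous concatenation of Proposition~\ref{prop:rescaling}. Since $M$ is a smooth manifold it is paracompact, hence every open cover of $M$ is numerable. By a classical theorem of Hurewicz (local-to-global over numerable bases), it suffices to show that each $x_0\in M$ admits an open neighborhood $W$ for which $F:F^{-1}(W)\to W$ has the homotopy lifting property with respect to every space $Y$.

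Fix $x_0\in M$ and let $W$ be the neighborhood produced by Proposition~\ref{prop:cross}, valid for all $p\in[1,p_c)$ and, in the drift-free case $X_0=0$, for all $p\in[1,\infty)$. Given $f:Y\to F^{-1}(W)$ and a homotopy $h:Y\times I\to W$ with $h(\cdot,0)=F\circ f$, I would set
\[
\tilde h(y,t)=\mathcal{C}\bigl(f(y),\,\sigma(F(f(y)),h(y,t)),\,T(F(f(y)),h(y,t))\bigr).
\]
Continuity of $\tilde h$ into $L^p(I,\R^d)$, equivalently into $\Omega$ with its $W^{1,p}$ topology, is immediate from composing the continuous evaluation $(y,t)\mapsto(F(f(y)),h(y,t))\in W\times W$ with the continuous maps $\hat\sigma$ (Proposition~\ref{prop:cross}) and $\mathcal{C}$ (Proposition~\ref{prop:rescaling}). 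At $t=0$ the properties $h(y,0)=F(f(y))$ and $\hat\sigma(x,x)=(0,0)$ force the arguments of $\mathcal{C}$ to reduce to $(f(y),0,0)$, and a direct inspection of the defining formula gives $\mathcal{C}(f(y),0,0)=f(y)$, so $\tilde h(\cdot,0)=f$. For the projection, the identity $F^1_x(\mathcal{C}(u,v,T))=F^{1+T}_x(u\ast v)$ of Proposition~\ref{prop:rescaling}, combined with the defining property of $\sigma$, yields
\[
F(\tilde h(y,t))=F^{1+T}_{x_0}\bigl(f(y)\ast\sigma(F(f(y)),h(y,t))\bigr)=F^{T}_{F(f(y))}\bigl(\sigma(F(f(y)),h(y,t))\bigr)=h(y,t),
\]
which is what is required.

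The substantive content has already been discharged by Propositions~\ref{prop:cross} and~\ref{prop:rescaling}, so what remains is essentially formal. The main point to watch is that $\tilde h(y,t)$ actually lands in $\Omega$, i.e.\ that its trajectory on $[0,1]$ is globally defined; this is automatic because the unrescaled concatenation $f(y)\ast\sigma(F(f(y)),h(y,t))$ is a bona fide trajectory on $[0,1+T]$ (the two pieces agree at the gluing point $F(f(y))$ by construction), and the identity of Proposition~\ref{prop:rescaling} transfers well-definedness to the rescaled control. The assertion $p_c=\infty$ when $X_0=0$ requires no further work: Proposition~\ref{prop:cross} furnishes $\hat\sigma$ for every $1\le p<\infty$ in that setting, and the formula defining $\tilde h$ is insensitive to $p$ within the admissible range.
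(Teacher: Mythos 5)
Your proof is correct and follows essentially the same route as the paper: reduce to the local homotopy lifting property via Hurewicz's uniformization theorem, then build the lift as $\mathcal{C}$ applied to the initial lift and the cross-section $\hat\sigma$ of Proposition~\ref{prop:cross}, with continuity and the projection identity supplied by Propositions~\ref{prop:cross} and~\ref{prop:rescaling}. Your explicit check that $\mathcal{C}(f(y),0,0)=f(y)$ (so the lift restricts to $f$ at $t=0$) is a detail the paper leaves implicit, but otherwise the arguments coincide.
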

\begin{proof}
Recall that \emph{Hurewicz fibration} means that $F$ has the homotopy lifting property with respect to every space $Z$.
By Hurewicz uniformization theorem \cite{Hurewicz}, it is enough to show that the homotopy lift property holds locally, i.e. every point $x\in M$ has a neighborhood $W$ such that $F|_{F^{-1}(W)}$ has the homotopy lifting property with respect to any space.

The case $p=1$ is proved in \cite{dynamic}, thus let $1<p<p_c$, $W$ and $\hat{\sigma}$ be given as in Proposition \ref{prop:cross}. Consider a continuous map $g:Z\times I\to W$ and a lift $\tilde{g}_0:Z\to \Omega$ such that $F(\tilde{g}_0(z))=g(z, 0)$ for all $z\in Z.$ We define the lifting homotopy $\tilde{g}:Z\times I\to \Omega$ by:
\be \tilde{g}(z,s)=\mathcal{C}(\tilde{g}_0(z),\underbrace{\sigma(g(z,0), g(z,s)), T(g(z,0), g(z,s))}_{\hat{\sigma}(g(z,0), g(z,s))})\ee
(here $\mathcal{C}$ is defined as in Proposition \ref{prop:rescaling} componentwise).

The defined function $\tilde{g}$ is the composition of continuous functions (by Propositions \ref{prop:cross} and \ref{prop:rescaling}). Moreover by the second assertions in Propositions \ref{prop:cross} and \ref{prop:rescaling}:
\be F(\tilde{g}(z,s))=g(z,s)\quad \forall (z,s)\in Z\times I,\ee
which proves the claim.
%We define two auxiliary maps. First let $C: L^p(I, \R^d)\times  L_c^p([0, \infty), \R^d)\times \R\to  L_c^p([0, \infty), \R^d)\times [1, \infty)$ be defined by:
%\be C(u,v,T)\mapsto (u*v, 1+T).\ee
%Second, define $R:L^p([0, \infty), \R^d)\times [1, \infty)$

%Let $W$ and $\hat{\sigma}: W\times W\to L^p((0, \infty), \R^d)\times\R $ be given by Proposition \ref{prop:cross}. 
%for every topological space $X$, given a continuous homotopy $g:X\times I\to M$ and a continuous  lift at time zero $\tilde{g}_0:X\to\Omega$ (i.e. $F(\tilde{g}_0(z))=g(z,0)$ for all $z\in X$) then there exists a continuous lift $\tilde{g}:X\times I\to \Omega$:
%\be F(\tilde{g}(z,s))=g(z,s)\quad \textrm{and}\quad \tilde{g}(z,0)=\tilde{g}_0(z) \quad \forall z\in X,\, \forall s\in I\ee

\end{proof}
%\comm{Put a proof or explain where one needs to modify Sarychev's one (everything is in the existence of the cross-section}
\begin{remark}[On the homotopy type of the fibers]As a consequence of Theorem \ref{thm:sarychev} all fibers of $F$ (even the singular fibers) have the same homotopy type \cite{Spanier}. Moreover, by the long exact homotopy sequence of Hurewicz fibrations \cite{Spanier} one also obtains the following isomorphisms between homotopy groups:
\be\label{eq:pik} \pi_k(\Omega(y))\simeq \pi_{k+1}(M)\quad \forall k\geq0\ee
%Indeed this property can be shown to be true for $p=2$ even if that's not the $p$ given by the Theorem (see \cite[Theorem 15]{AgrachevGentileLerario}). 
 \end{remark}
\begin{cor}\label{cor:lust}If the base manifold $M$ is compact and simply connected, then for every $p<p_c$ (where $p_c$ is given by Theorem \ref{thm:sarychev}) and  every $y\in M$ the Lusternik-Schnirelmann category of the space $\Omega(y)$ with respect to the $W^{1,p}$ topology is infinite.
\end{cor}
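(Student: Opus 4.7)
The plan is to transfer the problem to the classical path-loop fibration of $M$ and then invoke the well-known fact that the based loop space of a compact simply connected manifold has infinite Lusternik--Schnirelmann category.

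First, I would package the continuous inclusion $\iota : \Omega(y) \hookrightarrow \Omega_*^{x,y} M$ into the space of ordinary continuous paths from $x$ to $y$ as a morphism of fibrations: on top, the Hurewicz fibration $F : \Omega \to M$ provided by Theorem \ref{thm:sarychev}; on the bottom, the standard path-loop fibration $PM \to M$. Both total spaces are weakly contractible ($\Omega$ is star-shaped around the zero control via the scaling $u \mapsto tu$, at least in the subriemannian case, with a minor adaptation in the general affine case; $PM$ is classically contractible via the reparametrization $\gamma \mapsto \gamma(t\cdot)$). Chasing the two long exact homotopy sequences via the five lemma then shows that $\iota$ induces isomorphisms on all homotopy groups, so $\iota$ is a weak homotopy equivalence. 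This is consistent with the isomorphism \eqref{eq:pik} displayed in the previous remark.

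Next, since singular cohomology with any coefficient ring $R$ is a weak-homotopy invariant, $\iota^*$ identifies the cohomology rings of the two spaces and in particular their cup lengths. Combined with the classical lower bound $\mathrm{cat}(X) \geq \mathrm{cuplength}(X;R)$, this reduces the corollary to showing that $\Omega_*^{x,y} M$ (homotopy equivalent to the based loop space $\Omega_* M$ since $M$ is path-connected) has infinite cup length whenever $M$ is a compact simply connected manifold of positive dimension.

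This last step is the algebraic-topological core of the argument. Poincar\'e duality furnishes a non-zero top-degree cohomology class on $M$, and by the standard structure theory of loop spaces (transgression in the path-loop fibration together with Serre's computations, or the Friedlander--Halperin theorem on rational LS category) the ring $H^*(\Omega_* M;\mathbb{Q})$ contains elements of arbitrarily large multiplicative order. The \emph{hard part} is really this ingredient, but it is classical and used here as a black box; the novelty of the corollary lies in the reduction step, which is made possible precisely by the Hurewicz fibration property of Theorem \ref{thm:sarychev} and is what allows one to exchange the intractable horizontal loop space $\Omega(y)$ for the familiar $\Omega_* M$.
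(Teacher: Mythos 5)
Your proposal is correct and follows essentially the same route as the paper: use the Hurewicz fibration property of Theorem \ref{thm:sarychev} to identify $\Omega(y)$ with the ordinary loop space up to weak homotopy equivalence, transfer the (weak-homotopy-invariant) cup length, invoke the classical Serre/Schwartz result that the loop space of a compact simply connected manifold has infinite cup length, and conclude via $\mathrm{cat}\geq\mathrm{cuplength}$. The only differences are expository — you spell out the five-lemma/contractible-total-space argument and the algebraic-topological black box in more detail, while the paper routes the comparison through the $W^{1,2}$ ordinary loop space rather than the space of continuous paths.
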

\begin{proof}
Let $1<p<p_c$ be given by Theorem \ref{thm:sarychev}. Then the ordinary loop space and $\Omega(y)$ are \emph{weakly} homotopy equivalent (both spaces endowed with the $W^{1,p}$-topology). Moreover the Endpoint map for the $W^{1,p}$-\emph{ordinary} loop space is a Hurewicz fibration for every $p>1$ (it is a submersion), in particular the \emph{ordinary} loop spaces are all \emph{weakly} homotopy equivalent to the one with the $W^{1,2}$-topology. Since the cup length of the $W^{1,2}$-\emph{ordinary} loop space of a compact simply connected manifold is infinite (see \cite[Corollary 20]{Schwartz} or the classical work of Serre \cite{Serre}), so it is for $\Omega(y)$ with the $W^{1,p}$-topology. The cup-length is a lower bound for the Lusternik-Schnirelmann category, hence the result follows.
\end{proof}

\section{Critical points of geometric costs}\label{sec:critical}

\subsection{The regularity of the Energy}
For $p>1$ we define the $p$-Energy $J_p:L^p(I,\R^d)\to \R$ by (for simplicity we omit to make explicit the dependence of $J_p$ on $p$, when it will be clear from the context):
\be J_p(u)=\sum_{i=1}^d\|u_i\|_p^p, \quad u=(u_1, \ldots, u_d).\ee
To simplify notations below we will simply denote $L^p=L^p(I, \R^d)$, also we will omit the subscript notation for $u=(u_1, \ldots, u_d)$ when not needed (the corresponding equations should thus be interpreted componentwise).
 
We will need the following result on Nemitski operators.
\begin{thm}[Theorem 2.2 \cite{AmbrosettiProdi}]\label{thm:Nemitski}Let $g:I\times \R\to \R$ be a function such that (i) the function $v\mapsto g(t,v)$ is continuous for almost every $t\in I$; (ii) the function $t\mapsto g(t,v)$ is measurable for all $v\in \R$. Assume also there exists $a,b>0$ such that:
\be |g(t,v)|\leq a+b|v|^\alpha,\quad \alpha=\frac{p}{q}.\ee
Then the map $u(\cdot)\mapsto g(\cdot,u(\cdot))$ (a \emph{Nemitski operator}) is continuous from $L^p(I)$ to $L^q(I).$
\end{thm}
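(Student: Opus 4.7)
The plan is to verify the three standard properties of the Nemitski operator $N(u):=g(\cdot,u(\cdot))$: that it maps $L^p(I)$ into $L^q(I)$, that the image is measurable, and finally that $N$ is continuous between these spaces. The growth bound $|g(t,v)|\leq a+b|v|^{p/q}$ is the engine driving each step.

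For the well-definedness part, I would simply observe that
\begin{equation}
|g(t,u(t))|^q\leq (a+b|u(t)|^{p/q})^q\leq 2^{q-1}(a^q+b^q|u(t)|^p),
\end{equation}
which is integrable whenever $u\in L^p(I)$, since $I$ has finite measure and $|u|^p\in L^1(I)$. Measurability of $t\mapsto g(t,u(t))$ is the classical fact that a Carath\'eodory function composed with a measurable function is measurable: one approximates $u$ by simple functions $u_k$ (for which measurability is immediate from hypothesis (ii)), uses hypothesis (i) to pass to the pointwise limit $g(t,u_k(t))\to g(t,u(t))$ almost everywhere, and concludes.

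The continuity is the heart of the proof. I would use the classical subsequence principle: to show $N(u_n)\to N(u)$ in $L^q$, it suffices to prove that from every subsequence one can extract a further subsequence converging in $L^q$ to $N(u)$. Given $u_n\to u$ in $L^p$, a standard characterization of $L^p$ convergence yields a subsequence $u_{n_k}\to u$ almost everywhere and a function $h\in L^p(I)$ with $|u_{n_k}(t)|\leq h(t)$ a.e. Hypothesis (i) then gives $g(t,u_{n_k}(t))\to g(t,u(t))$ almost everywhere, while the growth bound provides the domination
\begin{equation}
|g(t,u_{n_k}(t))-g(t,u(t))|^q\leq 2^{2q-1}(a^q+b^q h(t)^p+b^q|u(t)|^p),
\end{equation}
whose right-hand side is in $L^1(I)$. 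The dominated convergence theorem then delivers $\|N(u_{n_k})-N(u)\|_q\to 0$, and the subsequence principle promotes this to convergence of the full sequence.

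The main obstacle is the dominated-function step, which is the one piece of real analytic content: an arbitrary $L^p$-convergent sequence does not converge pointwise nor need be dominated, but after passing to a subsequence both properties can be arranged simultaneously. Once that is secured, the growth hypothesis is precisely calibrated (with the exponent $\alpha=p/q$) so that the $q$-th power of $g(t,u_n(t))$ is controlled by an $L^1$ envelope, and dominated convergence finishes the proof.
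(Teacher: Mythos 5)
Your proof is correct. The paper does not prove this statement at all — it is quoted verbatim as Theorem 2.2 of Ambrosetti--Prodi — and your argument (growth bound for well-definedness, Carath\'eodory measurability via simple-function approximation, and the subsequence principle combined with the a.e.-convergent, $L^p$-dominated subsequence plus dominated convergence) is precisely the standard textbook proof of that cited result.
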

As a corollary we derive the following elementary lemma.
 \begin{lemma}\label{lemma:C1}The map $u\mapsto u|u|^{p-2}$ is a continuous map from $L^{p}(I)$ to $L^{\frac{p}{p-1}}(I)$. In particular, if $y$ is a regular value of the Endpoint map, then $f=J\big|_{\Omega(y)}$ is a $C^1$ function.
\end{lemma}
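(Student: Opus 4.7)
The plan is to obtain the first assertion as a direct instance of Theorem \ref{thm:Nemitski}, and then to use it to show that the Gateaux differential of $J_p$ (considered on the ambient $L^p$) is continuous as a map $L^p\to (L^p)^*$; this automatically promotes $J_p$ to a $C^1$-function on $L^p$, and the claim about $f=J_p|_{\Omega(y)}$ then follows from the Banach-space implicit function theorem.

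For the Nemitski step I would take $g(t,v)=v|v|^{p-2}$, extended by $0$ at $v=0$ (which is legitimate since $p>1$). The function $g$ is $t$-independent, so condition (ii) of Theorem \ref{thm:Nemitski} is automatic, (i) is the pointwise continuity just mentioned, and the growth estimate is immediate because $|g(t,v)|=|v|^{p-1}$: with $q=p/(p-1)$ the prescribed exponent $\alpha=p/q$ equals exactly $p-1$, so the required bound holds with $a=0$, $b=1$.

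For the second assertion I would first verify that $J_p\colon L^p\to\R$ is Gateaux differentiable with
\[
 dJ_p(u)[v]=p\int_I u|u|^{p-2}\cdot v\,dt
\]
(understood componentwise). This amounts to differentiating under the integral sign, which is justified by dominated convergence together with an elementary pointwise bound of the form $\bigl||u+sv|^p-|u|^p\bigr|\leq C_p(|u|^{p-1}|v|+|v|^p)|s|$ for $|s|\leq 1$ (both summands on the right lie in $L^1$ by H\"older's inequality). By the first assertion of the lemma, $u\mapsto u|u|^{p-2}$ is continuous from $L^p$ to $L^{p/(p-1)}=(L^p)^*$, so $dJ_p$ is continuous at every point; invoking the standard Banach-calculus criterion that a Gateaux derivative which is continuous in the base point is automatically a Fr\'echet derivative, we conclude $J_p\in C^1(L^p,\R)$. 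When $y$ is a regular value of $F$, the implicit function theorem in Banach spaces realizes $\Omega(y)=F^{-1}(y)$ as a $C^1$ Banach submanifold of $\Omega\subset L^p$, so $f=J_p|_{\Omega(y)}$ is $C^1$ as the restriction of a $C^1$ function to a $C^1$ submanifold.

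There is no serious obstacle here: the Nemitski verification is mechanical, and the single piece of functional-analytic bookkeeping, namely the passage from the Gateaux to the Fr\'echet differential, is standard and becomes clean precisely because the first half of the lemma provides the required continuity in the dual norm.
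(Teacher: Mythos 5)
Your proposal is correct and follows essentially the same route as the paper: both obtain the continuity of $u\mapsto u|u|^{p-2}$ from Theorem \ref{thm:Nemitski} with $\alpha=p/q=p-1$, and both deduce the $C^1$ property of $f$ by identifying $d_uJ=pu|u|^{p-2}\in L^q=(L^p)^*$ and restricting to the regular fiber. You simply spell out two steps the paper leaves implicit (the dominated-convergence computation of the Gateaux derivative and the Gateaux-to-Fr\'echet upgrade), which is fine.
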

\begin{proof}The continuity of $u\mapsto u|u|^{p-2}$ is immediate from the previous Theorem. Now, if $y$ is a regular value of the Endpoint, the differential $d_uf$ coincides with $d_uJ|_{T_u\Omega(y)}$ thus to prove that it is differentiable with continuous derivative it is enough to prove it for $J$. The differential $d_uJ$ as a linear functional on $L^p(I, \R^d)$ is easily computed to be (componentwise):
\be \langle d_uJ, h\rangle =\int_{0}^1pu(t)|u(t)|^{p-2}h(t)dt,\quad \textrm{for all $h\in L^p$},\ee
i.e. $d_uJ=pu|u|^{p-2}\in L^q=(L^p)^*$, then the result is clear from the previous claim.
\end{proof}

\begin{prop}[Palais-Smale condition]\label{prop:PS}Let $y$ be a regular value of the Endpoint map and $p>1$. Then the function $f=J|_{\Omega(y)}$  satisfies the Palais-Smale condition, i.e. any sequence $\{u_k\}_{k\in \mathbb{N}}\subset \Omega(y)$ on which $f$ is bounded and such that $d_{u_{k}}f\to 0$ has a convergent subsequence.
\end{prop}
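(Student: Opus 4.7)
I would proceed in three steps, combining reflexivity of $L^p$, the finite codimension of $\Omega(y)$, and uniform convexity of $L^p$ through the duality map $u\mapsto u|u|^{p-2}$ already met in Lemma~\ref{lemma:C1}.

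\textbf{Step 1 (weakly convergent subsequence with uniformly converging trajectories).} First, boundedness of $J(u_k)=\|u_k\|_p^p$ allows one to extract, after a subsequence, $u_k\rightharpoonup u$ weakly in the reflexive space $L^p$. An application of H\"older's inequality to the integral form of \eqref{eq:control} shows that the trajectories $\gamma_{u_k}$ are uniformly $1/q$-H\"older continuous with $q=p/(p-1)$, so up to a further subsequence they converge uniformly to some curve; passing to the limit in the integrated form of \eqref{eq:control} identifies this curve with $\gamma_u$. In particular $F(u)=y$, i.e.\ $u\in\Omega(y)$. Since $d_vF:L^p\to T_{F(v)}M$ depends on $v$ only through the fundamental matrix of the linearization along $\gamma_v$ and through the values $X_i(\gamma_v(\cdot))$, uniform convergence of the trajectories forces $d_{u_k}F\to d_uF$ in operator norm.

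\textbf{Step 2 (Lagrange multipliers).} I would then translate $d_{u_k}f\to 0$ into a Lagrange multiplier statement. Because $y$ is a regular value, $d_uF$ is surjective and admits a bounded right inverse; combined with Step~1 and the openness of surjective operators with finite-dimensional target, $d_{u_k}F$ is surjective for $k$ large, with a uniformly bounded right inverse. Writing $d_{u_k}f$ as the image of $d_{u_k}J$ in the quotient $L^q/\operatorname{Im}(d_{u_k}F^*)$, the hypothesis $d_{u_k}f\to 0$ produces $\mu_k\in T^*_yM$ with
\be
p\,u_k|u_k|^{p-2}\;=\;d_{u_k}F^*\mu_k+\eta_k,\qquad \|\eta_k\|_q\to 0.
\ee
The $L^q$-norm of the left-hand side equals $p\|u_k\|_p^{p-1}$, hence is bounded, and the uniform lower bound on $d_{u_k}F^*$ (dual to the uniform right-invertibility of $d_{u_k}F$) forces $\{\mu_k\}$ to remain bounded in the finite-dimensional $T^*_yM$; passing to a further subsequence, $\mu_k\to\mu$.

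\textbf{Step 3 (strong convergence via uniform convexity).} The continuity argument used in Step~1, applied to the adjoints, gives $d_{u_k}F^*\mu_k\to d_uF^*\mu$ uniformly and therefore in $L^q$. Combined with $\eta_k\to 0$, this yields $u_k|u_k|^{p-2}\to \tfrac{1}{p}d_uF^*\mu$ strongly in $L^q$. Since $L^p$ is uniformly convex for $1<p<\infty$, the duality map $u\mapsto u|u|^{p-2}$ is a homeomorphism $L^p\to L^q$, so inverting it converts this into strong convergence $u_k\to u_\infty$ in $L^p$; uniqueness of the weak limit identifies $u_\infty$ with $u$, proving the Palais--Smale condition. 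The main obstacle is Step~2, namely promoting the pointwise surjectivity of $d_uF$ at the (a priori unknown) weak limit $u$ to a \emph{uniform} right-inverse bound along the sequence, and thus to the boundedness of $\mu_k$. Everything hinges on the fact that $d_vF$ depends on $v$ only through $\gamma_v$ together with the \emph{uniform} convergence of trajectories extracted in Step~1; once this is in hand, the remaining estimates (strong $L^q$-convergence of $d_{u_k}F^*\mu_k$ and inversion of the duality map) are essentially formal.
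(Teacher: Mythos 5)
Your proposal is correct and follows the same strategy as the paper's proof: extract a weak limit, upgrade to uniform convergence of trajectories and norm convergence of the differentials $d_{u_k}F$, exploit the finite dimension of the normal space to get strong $L^q$ convergence of $d_{u_k}J=pu_k|u_k|^{p-2}$, and then invert the duality map to recover strong $L^p$ convergence. The only real difference is packaging: the paper works with projections $\pi_{u_k}$ onto finite-dimensional complements $W_{u_k}$ of $T_{u_k}\Omega(y)$ and with the coefficients $a_{j,k}$ of $\pi_{u_k}^*(d_{u_k}J)$ in the basis $w_j(\cdot;u_k)$ of $\operatorname{Im}(d_{u_k}F^*)$, whereas you phrase everything through Lagrange multipliers $\mu_k\in T_y^*M$; these are the same objects, since $d_{u_k}F^*\mu_k=\sum_j\mu_{k,j}w_j(\cdot;u_k)$. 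Your formulation via the quotient norm identity $\|d_{u_k}f\|=\operatorname{dist}_{L^q}(d_{u_k}J,\operatorname{Im} d_{u_k}F^*)$ is in fact slightly cleaner, as it bypasses the need to control the norms of the projections $\pi_{u_k}$ uniformly in $k$, a point the paper passes over quickly. One small imprecision: $d_vF$ does not depend on $v$ only through $\gamma_v$ --- the fundamental matrix solves $M_v'=\big(dX_0(\gamma_v)+\sum_i v_i\,dX_i(\gamma_v)\big)M_v$, so the controls enter directly, and uniform convergence of the trajectories alone does not give $d_{u_k}F\to d_uF$; you also need the weak convergence $u_k\rightharpoonup u$ together with the H\"older-equicontinuity argument of the paper's Lemma \ref{lemma:6} and Theorem \ref{thm:unif_conv_diff}. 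Since that weak convergence is available from your Step 1, this is a citation gap rather than a logical one.
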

\begin{proof}
%For a given control $u\in L^p$, let us denote by $\phi(s,t;u):\R^n\to \R^n$ the flow associated to $u$, i.t. the diffeomorphism that takes $y\in \R^n$, solves the subriemannian Cauchy problem with initial datum $\gamma(s)=y$ and gives $\gamma(t).$ Recall that (i) the map $u\mapsto \phi(\cdot, \cdot;u)$ is a continuous  map between $L^p(I, \R^d)$ with the weak topology and $C^0(I\times I, \R^n)$ with the strong topology (the topology of the uniform convergence); this fact is proved in \cite[Proposition 3.3]{Trelat} for the $p=2$ case, we prove it below in Theorem \ref{thm:unif_conv} for the general case; (ii) also the map $u\mapsto d\phi(\cdot, \cdot;u)$ is a continuous map between $L^2(I, \R^d)$ with the weak topology and $C^0(I\times I, GL(n, \R))$ with the strong topology; again, a proof for the $p=2$ case is given in \cite[Proposition 3.9]{Trelat}, for the general case we prove it in Lemma \ref{lemma:6}.

Consider the differential $d_uF$ of the endpoint map at a point $u$. Using the notations of Theorem \ref{thm:differentiability} we can write it, for any $v\in L^p$ as: 
\be (d_uF)v=\int_0^1M_u(1)M_u(s)^{-1}B_u(s)v(s)ds.\ee

Denote by $w_1(t;u), \ldots, w_n(t;u)$ the rows of the matrix $M_u(1)M_u(t)^{-1}B_u(t)$; notice that for $j=1, \ldots, d$ we have $w_j(\cdot;u)\in L^q$. If $u\in \Omega(y),$ then we can write:
    	\[
    	 T_u\Omega(y)=\ker d_uF=\textrm{span}\{w_1(\cdot; u), \ldots, w_n(\cdot;u)\}^{\perp};
    	\]
    	as the latter is a linear subspace, we also deduce:
    	\[
    	  T_u\Omega(y)^\perp=\textrm{span}\{w_1(\cdot; u), \ldots, w_n(\cdot;u)\}.
    	\]
    	In particular, for any $u\in\Omega(y)$, $T_u\Omega(y)$ is a closed subspace of codimension $n$ in $L^p$ and therefore it is complemented, i.e. there exists a closed and finite dimensional subspace $W_u$ such that
    	\begin{equation}
    	  \label{eqn:1}
    	  L^p=T_u\Omega(y)\oplus W_u;
    	\end{equation}
    	finally, observe that there exists a continuous linear projection $\pi_u:L^p\to W_u$ subordinated to this splitting, that is $\ker(\pi_u)=T_u\Omega(y)$, see \cite[Chapter 2]{Carothers}.
    	
    	\smallskip
    	
    	Let now $\{u_k\}_{k\in\N}\subset \Omega(y)$ be a bounded sequence such that $d_{u_k}f\to 0$. Since $d_uf=(d_uJ)|_{T_{u}\Omega(y)}$ then by definition of the projections $\pi_{u_k}$ we have:
    	\[
    	  \langle d_{u_k} J, (\textrm{Id}-\pi_{u_k})v\rangle\to 0,\quad \forall v\in L^p.
    	\]
    	The space $L^p$ is uniformly convex, hence reflexive by the Milman-Pettis theorem; the sequence $\{u_k\}$ is bounded by assumption and invoking Banach-Alaoglu we deduce the existence of a subsequence $\{u_{k_l}\}_{l\in\N}$ and $\overline u\in L^p$ such that $u_{k_l}\rightharpoonup \overline u$.  Furthermore, observe that if $q=p^*=\frac{p}{p-1}$ is the conjugate exponent of $p$, then:
    	\[
    	  d_uJ=pu|u|^{p-2}\Rightarrow \|d_uJ\|_q^q=\|u\|_p^p.
    	\]

    	By the above discussion, up to subsequences, we may thus assume that $\|u_k\|_p<C$ and $u_k\rightharpoonup \overline u$ in $L^p$. There exists then $K\in\N$ sufficiently large so that for any norm-one $v\in L^p$ and $k>K$ the following holds:
    	\begin{equation}
    	\label{eqn:2a}
    	  |\langle d_{u_k}J,\pi_{u_k}(v)\rangle|\leq|\langle d_{u_k}J,v\rangle|+|\langle d_{u_k}J ,v-\pi_{u_k}(v)\rangle|<C+1.
    	\end{equation}
    	It is well-known \cite[Section 3]{Carothers} that the splitting in \eqref{eqn:1} induces a dual splitting on $L^q$, namely for any $u\in\Omega(y)$ we have
    	\[
    	  L^q=(T_u\Omega(y))^*\oplus W_{u}^*;
    	\] 
        moreover the adjoint operator $\pi_{u_k}^*$ is still a projection with kernel $W_{u_k}^\perp$ and range $(T_{u_k}\Omega(y))^\perp \cong W_{u_k}^*\cong L^q/W_{u_k}^\perp=\textrm{span}\{w_1(\cdot; u_k), \ldots, w_n(\cdot;u_k)\}.$ In particular, \eqref{eqn:2a} shows that 
        \[
          \|\pi_{u_k}^*(d_{u_k}J)\|_q<C+1,\quad \forall k>K.
        \]
        Write: 
        \[
          \pi_{u_k}^*(d_{u_k}J)=\sum_{j=1}^na_{j,k}w_j(\cdot;u_k);
        \]
        since the projections have finite ranges, and all norms are equivalent on finite-dimensional spaces, by the above we deduce that there exists $C'>0$ so that 
        \begin{equation}
          \label{eqn:3}
          \sum_{j,l}a_{j,k}a_{l,k}\langle w_j(\cdot;u_k), w_l(\cdot;u_k)\rangle=\|\pi_{u_k}^*(d_{u_k}J)\|_2^2< C'.
        \end{equation}
        Because of Lemma \ref{lemma:6} and Theorem \ref{thm:unif_conv}  and the fact that $u_k\to \overline u$ weakly in $L^p$, then for every $j=1,\ldots,n$ the function $w_{j}(\cdot;u_k):[0,1]\to \R^d$ converges strongly (and hence in any $L^p$ norm) to a function $\overline w_j:[0,1]\to \R^d$.
        Also, $F(\overline u)=y$ and since $y$ is a regular value, then $\{\overline{w}_1, \ldots, \overline w_n\}$ is a linearly independent set.
        
        By \eqref{eqn:3} we have $\sum_{j,l}a_{j,k}a_{l,k}\langle\overline w_j, \overline w_l\rangle<C'$,
        which tells the sequence:
        \[
        \left \{z_k=\sum_{j}a_{j,k}\overline w_j\right\}_{k\in \mathbb{N}}\subset\textrm{span}\{\overline w_1,\ldots, \overline w_n\}\quad \textrm{is bounded}.
        \]
        Since $\textrm{span}\{\overline w_1,\ldots, \overline w_n\}$ is finite dimensional we can then assume $z_k\to\overline z$;
        since $\{\overline{w}_1, \ldots, \overline w_n\}$ is a linearly independent set then the sequences $\{a_{j,k}\}_{k\in \mathbb{N}}$ for $j=1, \ldots, n$ are bounded and we can assume they converge.  Consequently also $\pi_{u_k}^*(d_{u_k}J)\to \overline z$ (all this up to subsequences).
        
        Finally we have:
        \begin{align*}
        \lim_{k\to \infty} \| d_{u_k}J-\overline z\|_q & \leq \lim_{k\to \infty}\left(\| d_{u_k}J-\pi_{u_k}^*(d_{u_k}J)\|_q\right)\\
        &+\lim_{k\to \infty}\left(\|\pi_{u_k}^*(d_{u_k}J)-\overline z\|_q\right)=0.
        \end{align*}
This proves that $u_k|u_k|^{p-2}=d_{u_k}J\stackrel{L^q}{\longrightarrow}\overline z$ (up to subsequences), and the result follows now from the next Lemma \ref{lemma:grad}.
           \end{proof}
           
\begin{lemma}\label{lemma:grad}Let $\{u_n\}_{n\in \mathbb{N}}\subset L^p$ such that:
\be u_n|u_n|^{p-2}\stackrel{L^q}{\longrightarrow} z.\ee
Then $u_n\stackrel{L^p}{\longrightarrow} z|z|^{(2-p)/(p-1)}.$
\end{lemma}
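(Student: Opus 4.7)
The plan is to recognize the claim as the continuity of a Nemitski operator and apply Theorem \ref{thm:Nemitski} in the opposite direction to how it was used for Lemma \ref{lemma:C1}. Consider the real-valued function $\phi:\R\to\R$ defined by $\phi(s)=s|s|^{p-2}$. A direct computation shows that $\phi$ is a homeomorphism of $\R$ whose inverse is
\be
\phi^{-1}(t)=t|t|^{(2-p)/(p-1)},
\ee
since $|\phi(s)|=|s|^{p-1}$ and signs are preserved; checking $\phi(\phi^{-1}(t))=t$ is a one-line exponent arithmetic. Thus the statement we wish to prove is exactly that $u_n=\phi^{-1}(u_n|u_n|^{p-2})$ converges in $L^p$ to $\phi^{-1}(z)=z|z|^{(2-p)/(p-1)}$, i.e.\ that the Nemitski operator $N:v(\cdot)\mapsto \phi^{-1}(v(\cdot))$ is continuous from $L^q(I)$ to $L^p(I)$.

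Next I would invoke Theorem \ref{thm:Nemitski} with the roles of $p$ and $q$ swapped: set $g(t,v)=v|v|^{(2-p)/(p-1)}$, which is independent of $t$. The $t$-measurability and $v$-continuity hypotheses are immediate, the only delicate point being continuity at $v=0$ when $p>2$ (so that the exponent $(2-p)/(p-1)$ is negative); however $|g(t,v)|=|v|^{1/(p-1)}$ with $1/(p-1)>0$, so $g(\cdot,0)=0$ and $g$ is continuous across the origin. Moreover this identity $|g(t,v)|=|v|^{1/(p-1)}$ gives the required growth bound with exponent $\alpha=1/(p-1)=q/p$, which is exactly the value prescribed by Theorem \ref{thm:Nemitski} for a Nemitski operator mapping $L^q$ to $L^p$.

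Applying the theorem, $N$ is continuous from $L^q$ to $L^p$; evaluating on the convergent sequence $v_n:=u_n|u_n|^{p-2}\to z$ in $L^q$ yields $N(v_n)\to N(z)$ in $L^p$, that is $u_n\to z|z|^{(2-p)/(p-1)}$ in $L^p$. The only ``obstacle'' is the bookkeeping of exponents and the observation that the possibly singular-looking inverse is in fact Hölder continuous near zero with exponent $1/(p-1)>0$; once this is noticed, the proof reduces to a single application of Theorem \ref{thm:Nemitski}.
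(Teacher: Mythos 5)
Your proof is correct and is essentially the paper's own argument: both identify the inverse Nemitski operator $N:v\mapsto v|v|^{(2-p)/(p-1)}$, verify the growth bound $|v|^{1/(p-1)}=|v|^{q/p}$, and apply Theorem \ref{thm:Nemitski} (with the roles of the exponents swapped) to conclude $u_n=N(u_n|u_n|^{p-2})\to N(z)$ in $L^p$. The explicit check that $\phi(s)=s|s|^{p-2}$ is a homeomorphism with the stated inverse is a useful addition that the paper leaves implicit.
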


\begin{proof}Consider the Nemitski operator $N:L^q\to L^p$ defined by $v\mapsto v|v|^{(2-p)/(p-1)}.$ Since:
\be \left| v|v|^{\frac{2-p}{p-1}}\right|\leq |v|^{\frac{1}{p-1}}=|v|^{\frac{p}{p-1}\cdot\frac{1}{p}} \ee
then $N\in C^{0}(L^q, L^p)$ by Theorem \ref{thm:Nemitski}. In particular $u_n=N( u_n|u_n|^{p-2})\stackrel{L^p}{\longrightarrow} N(z)$, and the claim follows.
\end{proof}
\subsection{Critical points}\label{sec:LSappli}
\begin{thm}\label{thm:critical}Let $y$ be a regular value for the endpoint map of the control system \eqref{eq:control}, $1<p<p_c$ (where $p_c$ is given by Theorem \ref{thm:sarychev}) and consider $f=J_p|_{\Omega(y)}$. Then $f$ has infinitely many critical points.
\end{thm}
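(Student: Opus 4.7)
The scheme is a Lusternik--Schnirelmann multiplicity argument on the Banach manifold $\Omega(y)$ (with the standing compactness assumption on $M$ from the introduction). All the analytic prerequisites are already in place: $f=J_p|_{\Omega(y)}$ is $C^1$ (Lemma \ref{lemma:C1}), satisfies the Palais--Smale condition (Proposition \ref{prop:PS}), and is bounded below by $0$. Since $\Omega(y)$ is a $C^1$-submanifold of the reflexive Banach space $L^p$, these ingredients provide a Palais-type pseudo-gradient vector field and the usual deformation lemma; the classical LS inequality then gives at least $\mathrm{cat}(\Omega(y))$ critical points, so it suffices to show that this category is infinite (or, failing that, to produce the critical points by a more direct component-counting argument).

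To do so I would split into two cases on $\pi_1(M)$. If $\pi_1(M)$ is infinite, the long exact homotopy sequence of the Hurewicz fibration from Theorem \ref{thm:sarychev} (cf.\ \eqref{eq:pik}) gives $\pi_0(\Omega(y))\simeq \pi_1(M)$, so $\Omega(y)$ has infinitely many connected components; on each component the infimum of $f$ is attained by a standard negative pseudo-gradient flow argument using PS plus boundedness below, producing infinitely many critical points directly. If instead $\pi_1(M)$ is finite, I would pass to the universal cover $\widetilde\pi:\widetilde M\to M$, which is compact and simply connected. Lift $\mathcal F$ to a family $\widetilde{\mathcal F}$ on $\widetilde M$, lift the basepoint $x$ to some $\widetilde x$, and pick $\widetilde y\in\widetilde\pi^{-1}(y)$. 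Because $\widetilde\pi$ is a local diffeomorphism, $\widetilde y$ is a regular value of the lifted endpoint map, and each connected component of $\Omega_M(y)$ is identified, as a Banach manifold with its $W^{1,p}$ topology and compatibly with $J_p$, with $\Omega_{\widetilde M}(\widetilde y)$. Corollary \ref{cor:lust} applied to $\widetilde M$ then gives $\mathrm{cat}(\Omega_{\widetilde M}(\widetilde y))=\infty$, and the LS machinery produces the required infinitely many critical points, which descend to critical points of $f$.

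The main obstacle I anticipate is articulating the LS multiplicity theorem precisely in this Banach-manifold setting: most textbook formulations are tailored to Hilbert manifolds and rely on honest Riemannian gradient flows. Here I would invoke the $C^1$-Finsler-manifold version in the spirit of Palais, checking that $\Omega(y)$ inherits a complete Finsler structure from the ambient $L^p$-norm so that the standard deformation/minimax argument goes through with a pseudo-gradient built from $df$ and the $L^q$-duality of $L^p$. A secondary, easier point is to make the universal-cover reduction rigorous at the level of $L^p$-controls: this is automatic because lifting trajectories solves the same ODE \eqref{eq:control} pointwise, so the correspondence at the level of $u$'s is the identity and is therefore an $L^p$-isometry.
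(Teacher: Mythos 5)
Your proposal is correct and follows essentially the same route as the paper: the trichotomy on $\pi_1(M)$ (infinitely many components when $\pi_1$ is infinite, reduction to the compact universal cover when $\pi_1$ is finite, and Corollary \ref{cor:lust} plus the Lusternik--Schnirelmann multiplicity theorem in Chang's Banach/Finsler setting in the simply connected case) is exactly the paper's argument, with Lemma \ref{lemma:C1} and Proposition \ref{prop:PS} supplying the analytic hypotheses. The only cosmetic difference is in the covering step: you identify the components of $\Omega(y)$ with the fibers $\Omega_{\widetilde M}(\widetilde y_i)$ via the control-level identity, whereas the paper instead checks directly through the Lagrange multiplier condition that critical points upstairs project to critical points of $f$; both implementations work.
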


\begin{proof}The first part of the proof follows the lines of the classical argument.
Assume first that  the fundamental group of $M$ is infinite. Then by \eqref{eq:pik} $\Omega(y)$ has infinitely many components.
Lemma \ref{lemma:C1} tells that $f$ is $C^1$ and Proposition \ref{prop:PS} that it satisfies the Palais-Smale condition. Assume that one component of $\Omega(y)$ does not contain any critical point of $f$. Then we can apply the deformation lemma \cite[Lemma 3.2]{Chang} and conclude that $f$ needs to be unbounded from below, which is in contradiction with the definition $f=J_p|_{\Omega(y)}\geq 0$. 
%and consider its universal cover $r:\overline M\to M.$ The subriemannian structure of $M$ can be naturally lifted to $\overline M$; since $M$ is compact than it is complete (as a subriemannian manifold) \cite[Section 3.4]{AgrachevBarilariBoscain}, hence its universal cover is complete as well. Let $\overline{x}$ be a lift of $x$ and consider infinitely many lifts $\{\overline{y}_n\}_{n\in \mathbb{N}}$ of  $y$. For every $n\in \mathbb{N}$ there exists a (minimizing) geodesic joining $\overline x$ with $\overline{y}_n$ (by completeness of $\overline M$, see again \cite[Section 3.4]{AgrachevBarilariBoscain}): let $\overline{\gamma}_n$ be such geodesic. Then the projection $r\circ \overline{\gamma}_n$ is an admissible curve between $x$ and $y$ and it is a geodesic (geodesics project to geodesics, since locally the two subriemannian structures on $M$ and $\overline M$ are isometric). Moreover if  $n\neq m$ then the two curves $ r\circ \overline{\gamma}_n$ and  $r\circ \overline{\gamma}_m$ are distinct (they belong to different homotopy classes). Since there are infinitely many such curves, we have infinitely many geodesics between $x$ and $y$.

Assume now the fundamental group of $M$ is finite. Let us call $r:\overline{M}\to M$ the universal covering map. Then $\overline{M}$ is also compact, and the structure $\mathcal{F}$ can be lifted to a structure $\overline{\mathcal{F}}=\{\overline X_0, \ldots, \overline X_d\}$  by setting:
\be d_{\overline{x}}r\overline X_i(\overline{x})=X_i(r(\overline x)).\ee
Let $\overline{x}$ be a lift of  $x$ and  $\{\overline{y}_1, \ldots, \overline{y}_k\}$ be the lifts of $y$ (here $k= \#\pi_1(M)$, the number of sheets of the covering map).  Denote by $\overline{\Omega}$ the set of horizontal curves on $\overline{M}$ leaving from $\overline x$, by $\overline{F}$ the corresponding endpoint map and by $\overline{\Omega}(\overline y)$ the set of horizontal curves on $\overline{M}$ between $\overline{x}$ and $\overline y\in \overline M$. We denote by $\overline{r}:\overline{\Omega}\to \Omega$ the smooth map that associates to a horizontal trajectory $\overline{\gamma}$ on $\overline{M}$ the trajectory $r\circ \overline\gamma$ on $M$. Notice that in coordinates this map is the identity maps on controls (hence it is a local diffeomorphism), and in particular:
\be J(\overline \gamma)=J(\overline{r}(\overline\gamma)).\ee
Moreover, by construction the following diagram is commutative:
$$\begin{tikzpicture}[xscale=2.5, yscale=2]

    \node (A0_0) at (0, 0) {$\overline{M}$};
    \node (A1_0) at (1, 0) {$M$};
    \node (A0_1) at (0, 1) {$\overline{\Omega}(\overline y)$};
    \node (A1_1) at (1, 1) {$\Omega(y) $};
    \path (A0_0) edge [->] node [auto] {$r$} (A1_0);
    \path (A0_1) edge [->] node [auto,swap ] {$\overline{F}$} (A0_0);
    \path (A0_1) edge [->] node [auto] {$\overline{r}$} (A1_1);
    \path (A1_1) edge [->] node [auto] {$F$} (A1_0);
      \end{tikzpicture}
$$
and since $r$ and $\overline{r}$ are local diffeomorphism, then $\overline{y}$ is a regular value of $\overline{F}$.

If we prove the statement for $\overline{M}$, then we are done: in fact given a critical point $\overline{u}$ for the geometric cost $\overline{f}=J|_{\overline{\Omega}(\overline y)}$ then $\overline r (\overline u)$ is a critical point for $f$ (hence we would obtain an infinite numbers of distinct critical points for $f$). To see this fact let us use the Lagrange multiplier formulation: $\overline{u}$ is a critical point of $\overline f$ if and only if there exists $\overline {\lambda}\in T^*_{\overline y}\overline{M}$ such that:
\be\label{eq:critical} \overline{\lambda} \circ d_{\overline{u}}\overline{F}=d_{\overline{u}} J.\ee
Using the commutativity of the above diagram, and the fact that $r$ is a local diffeomorphism we see that this implies the existence of a $\lambda\in T^*_yM$ such that
\be\label{eq:lambda} \lambda \circ d_{\overline{r}(\overline{u})}F\circ d_{\overline{u}}\overline{r}=d_{\overline{r}(\overline{u})}J\circ d_{\overline{u}}\overline{r}:\ee
in fact
\begin{align}
  \label{eqn:critical_2}
  d_{\overline{r}(\overline{u})}J\circ d_{\overline{u}}\overline{r}&=d_{\overline{u}}J\\
  &=\overline{\lambda}\circ d_{\overline{u}}\overline{F}\\
  &=\overline{\lambda}\circ d_{r(\overline{F}(\overline{u}))}r^{-1}\circ d_{\overline{F}(\overline{u})}r\circ d_{\overline{u}}\overline{F}\\
  &=\lambda\circ d_{\overline{r}(\overline{u})}F\circ d_{\overline{u}}\overline r.
\end{align}

On the other hand, being $\overline{r}$ a local diffeomorphism, $d_{\overline{u}}\overline{r}$ is also an isomorphism of vector spaces; consequently simplifying it from \eqref{eq:lambda} we can write:
\be \lambda \circ d_{\overline{r}(\overline{u})}F=d_{\overline{r}(\overline{u})}J\ee
which tells exactly that $\overline{r}(\overline{u})$ is a critical point for $f$.

We are left with the case $M$ compact and \emph{simply connected}. Let $y$ be a regular value of the endpoint map and consider the horizontal path space $\Omega(y)$ endowed with the $W^{1,p}$ topology (recall that we are assuming $1<p<p_c$ with $p_c$ given by Theorem \ref{thm:sarychev}). Since $y$ is a regular value of the Endpoint map, $\Omega(y)$ is a smooth Banach manifold modeled on $L^p=L^{p}([0, 1], \mathbb{R}^d)$ (here $d$ is the rank of the distribution). The function $f$ is $C^1$ (by Lemma \ref{lemma:C1}) and it  satisfies the Palais-Smale condition (by Proposition \ref{prop:PS} above), hence the results follows from Corollary \ref{cor:lust} and the following Proposition.
\begin{prop}[Corollary 3.4 from \cite{Chang}]Let $\Omega(y)$ be Banach manifold and $f\in C^1(\Omega(y), \mathbb{R})$ bounded from below and satisfying the Palais-Smale condition. Then $f$ has at least as many critical points as the Lusternik-Schnirelmann category of $\Omega(y)$.
\end{prop}\end{proof}
\section{The subriemannian case}\label{sec:subriemannian}

In this section we discuss applications of the previous results to the subriemannian case, in particular we will always make the assumption $X_0=0$.

\subsection{Geodesics}

Given two points $x,y$ in a subriemannian manifold $M$, a \emph{subriemannian geodesic} is a curve $\gamma:I\to M$ satisfying the following properties: (i) it is absolutely continuous; (ii) its derivative (which exists almost everywhere) belongs to the subriemannian distribution; (iii) it is parametrized by arc-length; (iv) $\gamma(0)=x$ and $y(1)=y$; (v) it is locally length minimizer, i.e. for every $t\in [0,1]$ there exists $\delta(t)>0$ such that $\gamma|_{[t-\delta(t), t+\delta(t)]}$ has minimal length among all horizontal curves joining $\gamma(t-\delta(t))$ with $\gamma(t+\delta(t)).$
\begin{prop}\label{prop:geodesics}Let $y$ be a regular value of the Endpoint map centered at $x$. For every $p>1$ all critical points of $f=J_p\big|_{\Omega(y)}$ are subriemannian geodesics joining $x$ to $y$.
\end{prop}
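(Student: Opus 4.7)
The plan is to apply the Lagrange multiplier rule at $u$, upgrade regularity via the resulting pointwise identity, identify $(\gamma,\lambda)$ as a normal extremal of the Pontryagin maximum principle, and finally invoke the classical calibration argument that makes normal extremals locally length-minimizing.

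Since $y$ is a regular value of $F$, the fiber $\Omega(y)$ is a smooth Banach submanifold of $L^p$, and $u$ is critical for $f=J_p|_{\Omega(y)}$ iff there exists $\lambda\in T^*_yM$ with $d_uJ_p=\lambda\circ d_uF$ on $L^p$. Combining the explicit formula $d_uJ_p\cdot v=p\int_0^1\sum_i u_i(t)|u_i(t)|^{p-2}v_i(t)\,dt$ from Lemma~\ref{lemma:C1} with the integral representation $(d_uF)v=\int_0^1 M_u(1)M_u(s)^{-1}B_u(s)v(s)\,ds$ used in the proof of Proposition~\ref{prop:PS}, I reduce the Lagrange equation to the pointwise ODE
\[
p\,u_i(t)|u_i(t)|^{p-2}=h_i(t):=\langle\lambda(t),X_i(\gamma(t))\rangle,\qquad i=1,\ldots,d,\ \text{a.e.}\ t\in I,
\]
where $\lambda(t):=\lambda\,M_u(1)M_u(t)^{-1}$ is the backward-transported covector.

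Inverting the pointwise nonlinearity (this is exactly the content of Lemma~\ref{lemma:grad}), I obtain $u_i(t)=p^{-1/(p-1)}h_i(t)|h_i(t)|^{(2-p)/(p-1)}$. Since $\lambda(t)$ is absolutely continuous (as a solution of the linear adjoint equation $\dot\lambda=-\lambda A_u(t)$) and $X_i\circ\gamma$ is continuous, each $h_i$ is continuous, and hence so is $u_i$; then $\gamma\in C^1$ and a standard bootstrap upgrades this to $\gamma\in C^\infty$. At this stage the pair $(\gamma,\lambda)$ satisfies Hamilton's equations for the $p$-Hamiltonian
\[
H_p(\lambda,x)=\frac{p-1}{p}\sum_{i=1}^d|\langle\lambda,X_i(x)\rangle|^{p/(p-1)},
\]
so $\gamma$ is a normal extremal of the subriemannian optimal control problem with cost $J_p$; conservation of $H_p$ along the flow combined with the pointwise identity gives $\sum_i|u_i(t)|^p=\mathrm{const}$, so $u$ is either identically zero (the trivial geodesic) or nowhere vanishing.

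Assuming $\lambda\neq 0$, properties (i)--(iv) of a subriemannian geodesic follow by reparametrizing the smooth, nowhere-stationary curve $\gamma$ by arc-length. For property (v), I would invoke the classical theorem of subriemannian geometry (see e.g.\ Montgomery, or Agrachev--Barilari--Boscain) that normal extremals are locally length-minimizing: in a neighborhood of any $t_0$, the Hamiltonian flow of $H_p$ provides a smooth field of normal extremals through $\gamma(t_0)$, and the corresponding value function serves as a calibration showing that short arcs of $\gamma$ minimize $J_p$ (hence, via H\"older's inequality $\ell(\gamma')\le|I|^{1/q}J_p(\gamma')^{1/p}$ with equality characterizing arc-length parametrization, they also minimize length after reparametrization).

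The main obstacle is the last step: bridging from an $H_p$-extremal to a locally length-minimizing curve when $p\neq 2$. For $p=2$ this is the textbook normal-extremal theorem, while for general $p$ one must either run the calibration argument directly for $H_p$ (which is essentially the standard proof with $H_p$ in place of $H_2$, using smoothness of the exponential map at a regular point) or show that after arc-length reparametrization on a small subinterval the curve satisfies the $H_2$-Hamilton equations, reducing the statement to the classical one.
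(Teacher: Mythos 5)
Your strategy (Lagrange multipliers $\to$ pointwise PMP identity $\to$ regularity bootstrap $\to$ normal $H_p$-extremal $\to$ local minimality) is sound and genuinely different from the paper's, and the obstacle you flag at the end is real: the ``classical theorem'' that normal extremals are locally length minimizing is stated in the literature for the $p=2$ Hamiltonian, so for $p\neq 2$ you must do extra work. The paper closes exactly this gap with your second suggested route, but in a purely algebraic (and reparametrization-free) way: it first argues that a critical point of $J_p|_{\Omega(y)}$ has constant speed $|u|\equiv c>0$ (via the elementary inequality $\left(\int|u|\right)^p\leq\int|u|^p$, with equality iff $|u|$ is constant, applied to variations supported on small subintervals --- your derivation of constant speed from conservation of $H_p$ is a perfectly good, arguably more rigorous, substitute). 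Once $|u|\equiv c$, the multiplier equation $\lambda\circ d_uF=pu|u|^{p-2}$ becomes $\eta\circ d_uF=u$ with $\eta=\lambda/(pc^{p-2})$, i.e.\ \emph{verbatim} the critical point equation for $J_2$ on $\Omega(y)$. Hence the critical sets of $J_p$ and $J_2$ coincide on a regular fiber, and one simply cites \cite[Theorem 4.57]{AgrachevBarilariBoscain} for $p=2$; no calibration argument for $H_p$ and no arc-length reparametrization of a subinterval is needed, because the curve already solves the $H_2$-Hamilton equations with its given parametrization. If you prefer your Hamiltonian language, the same reduction reads $H_p=\phi(H_2)$ for a smooth increasing $\phi$ on $(0,\infty)$, so on a positive level set of $H_2$ the vector fields $\vec H_p$ and $\vec H_2$ are proportional by a constant and the extremals coincide up to a linear time change. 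Two minor cautions: (i) be consistent about whether $|u|$ denotes the Euclidean norm of the control vector or you work componentwise with $\sum_i\|u_i\|_p^p$ --- the clean reduction above uses the Euclidean reading; (ii) Lemma~\ref{lemma:grad} is about $L^q\to L^p$ continuity of the Nemitski operator, whereas what you need at that stage is just the elementary pointwise inversion of $s\mapsto s|s|^{p-2}$.
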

\begin{proof}First let us notice that curves that are \emph{locally} $J_p$-minimizers are parametrized by constant speed and are locally length minimizer (the proof of this fact is the same as the classical proof for $p=2$ as in \cite[Section 12]{Milnor} and essentially uses the fact that $\left( \int |u|\right)^p\leq \int |u|^p$ with equality if and only if $|u|\equiv c$).
Also, being locally length minimizer and parametrized by constant speed implies that \emph{globally} the parametrization is with constant speed. 

Let us consider the equation for $u\in L^p$ to be a critical point of $f=J_p|_{F^{-1}(y)}$ (using Lagrange multipliers rule):
\be\label{eq:critp}\exists \lambda\in T^*_yM\quad \textrm{such that}\quad  \lambda \circ d_uF=pu|u|^{p-2}.
\ee
In particular since a critical point $u$ of $f$ is a \emph{local} length minimizer (this can be seen by considering variations of only a small portion of the corresponding curve), we must have $|u|\equiv c>0$ and we can rewrite \eqref{eq:critp} as:
\be \exists \eta=\frac{\lambda}{p c} \in T^*_yM\quad \textrm{such that}\quad  \eta \circ d_uF=u,\ee
which is the equation for the critical points of $J_2$ on $\Omega(y)$. 

Thus if $y$ is a regular value of the Endpoint map, the critical points of $J_2$ and $J_p$ on $\Omega(y)$ are the same; since critical points of $J_2\big|_{\Omega(y)}$ are subriemannian geodesics joining $x$ to $y$ (see \cite[Theorem 4.57]{AgrachevBarilariBoscain}), the result follows.
\end{proof}
As a corollary of Propositon \ref{prop:geodesics} and Theorem \ref{thm:critical}, we obtain the subriemannian version of Serre's theorem. 
\begin{thm}[Subriemannian Serre's Theorem]If $y$ is a regular value of the endpoint map centered at a point $x$ in a compact \emph{subriemannian} manifold, the set of subriemannian geodesics joining $x$ and $y$ is infinite.
\end{thm}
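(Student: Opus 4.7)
The proof plan is to assemble the pieces that are already in place. In the subriemannian setting we have $X_0=0$, so by Theorem \ref{thmi:sarychev} the critical exponent satisfies $p_c=\infty$; in particular we are free to pick any $p>1$, say $p=2$. Fix such a $p$ and consider the restricted $p$-Energy $f=J_p|_{\Omega(y)}$ on the Banach manifold $\Omega(y)\subset L^p([0,1],\R^d)$, which is smooth because $y$ is assumed to be a regular value of the endpoint map $F$.

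Apply Theorem \ref{thm:critical} with our fixed $p$: under our assumptions ($M$ compact, $y$ regular, $1<p<p_c=\infty$) the conclusion is that $f$ admits infinitely many critical points in $\Omega(y)$. Now appeal to Proposition \ref{prop:geodesics}: every critical point of $f$ is a subriemannian geodesic joining $x$ to $y$. Combining these two facts immediately yields infinitely many subriemannian geodesics from $x$ to $y$, which is the statement to be proved.

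The only point that deserves a brief remark is that distinct critical controls actually produce distinct geodesics, and not merely different parametrizations of the same geometric curve. This is built into the setup: every critical point $u$ of $f$ has $|u|\equiv c>0$ by the argument in the proof of Proposition \ref{prop:geodesics}, so the associated horizontal curve is parametrized with constant speed on the fixed interval $[0,1]$; since the constant speed is determined by the length of the geodesic, each unparametrized geodesic from $x$ to $y$ corresponds to a single control $u\in\Omega(y)$, and thus the infinitude of critical points in $\Omega(y)$ translates directly into an infinitude of geodesics.

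There is essentially no obstacle in this final step — all the real work has already been done in Theorem \ref{thm:critical} (Lusternik–Schnirelmann theory together with the Hurewicz fibration property and Palais–Smale, treating both the cases $\pi_1(M)$ infinite and $M$ simply connected via the universal cover) and in Proposition \ref{prop:geodesics} (which identifies constant-speed critical points of $J_p$ with critical points of $J_2$, hence with subriemannian geodesics via \cite[Theorem 4.57]{AgrachevBarilariBoscain}). The deduction of the subriemannian Serre's theorem is then a one-line consequence.
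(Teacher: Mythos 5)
Your proposal is correct and follows exactly the route the paper takes: the paper derives this theorem as an immediate corollary of Theorem \ref{thm:critical} (infinitely many critical points of $J_p|_{\Omega(y)}$ for a regular value $y$ on a compact base) combined with Proposition \ref{prop:geodesics} (critical points of $J_p|_{\Omega(y)}$ are subriemannian geodesics from $x$ to $y$). Your extra remark that distinct constant-speed critical controls yield distinct geodesics is a reasonable clarification but does not change the argument.
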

\subsection{The contact case}
In the contact case we can remove from the subriemannian Serre's theorem the \emph{regularity} assumption on the two points. In fact the same proof works in the slightly more general case of \emph{fat} distributions (see \cite{Montgomery} for more details on these distributions), as the only property that we are going to use is that there are no nontrivial abnormal curves.

\begin{thm}\label{thm:contact}For every two points on a compact, contact subriemannian manifold the set of subriemannian geodesics joining them is infinite.\end{thm}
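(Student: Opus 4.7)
The plan is to reduce the claim to the subriemannian Serre theorem (Theorem \ref{thm:critical}) by verifying that the regularity hypothesis on $y$ is automatic for fat distributions, apart from the one exceptional case $y=x$ which needs a minor adjustment. Recall first that the critical points of the endpoint map $F:\Omega\to M$ coincide with the abnormal horizontal curves starting at $x$, and that in a fat distribution — contact being the basic example — the only such abnormal curve is the trivial constant one, corresponding to the control $u\equiv 0$. Hence $F$ has a single critical point, mapping to $x\in M$, so every $y\neq x$ is automatically a regular value; for such $y$, Theorem \ref{thm:critical} combined with Proposition \ref{prop:geodesics} directly furnishes infinitely many subriemannian geodesics joining $x$ to $y$.

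The substantive work is the loop case $y=x$. Although $\Omega(x)$ fails to be a smooth Banach manifold at the single point $0$, it is smooth everywhere else, and $\Omega^*(x):=\Omega(x)\setminus\{0\}$ is a genuine Banach submanifold of $L^p$. Because $\Omega(x)$ is modeled away from $0$ on an infinite-dimensional space, removing a single point does not alter the weak homotopy type, so by Corollary \ref{cor:lust} (together with the covering-space reduction used in Theorem \ref{thm:critical} when $\pi_1(M)$ is nontrivial) the punctured space $\Omega^*(x)$ still has infinite Lusternik--Schnirelmann category. I would then apply LS theory to $f=J_p|_{\Omega^*(x)}$, which is $C^1$ and bounded below by $0$. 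The argument of Proposition \ref{prop:PS} goes through verbatim at every strictly positive level $c>0$: the strongly convergent subsequence it produces has $L^p$-limit of norm $c^{1/p}>0$, hence the limit lies automatically in $\Omega^*(x)$. The min-max critical values
\[ c_k=\inf_{\mathrm{cat}(A)\geq k}\,\sup_{u\in A} f(u) \]
are strictly positive for $k\geq 2$, since the sublevel sets $\{f\leq\varepsilon\}$ for small $\varepsilon>0$ retract onto a punctured neighborhood of $0$ and are therefore contractible; infinite LS category then forces $c_k\to+\infty$, producing infinitely many distinct critical values and hence, by Proposition \ref{prop:geodesics}, infinitely many distinct geodesic loops based at $x$.

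The main obstacle is the careful handling of the puncture at $0$: one must confirm that the weak homotopy type of the punctured fiber agrees with that of the full fiber, that Palais--Smale survives at positive levels despite the infimum $0$ being unattained, and that the LS min-max hierarchy is strictly positive above level $0$. All three points are plausible from the infinite-dimensionality of the model space, but they require explicit verification before the critical-point machinery of Theorem \ref{thm:critical} can be re-run on $\Omega^*(x)$.
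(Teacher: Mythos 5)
Your reduction to the case $y=x$ and the overall strategy (work on the punctured fiber, use infinite Lusternik--Schnirelmann category plus Palais--Smale at positive levels) match the paper's. However, the step on which everything hinges --- that $\Omega^*(x)=\Omega(x)\setminus\{0\}$ has infinite LS category --- is justified in your proposal by the claim that ``removing a single point does not alter the weak homotopy type'' because the space is modeled on an infinite-dimensional Banach space. This is exactly where the argument breaks: the standard fact that puncturing an infinite-dimensional manifold preserves its homotopy type uses the manifold structure \emph{at the removed point} (one pushes maps off the point inside a chart), and $0$ is precisely the one point of $\Omega(x)$ that is \emph{not} a manifold point --- it is the singular point of the fiber, where $F$ is critical and no local model is available. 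The same unproved local statement reappears when you assert that the punctured sublevel sets $\{0<f\le\varepsilon\}$ are contractible in $\Omega^*(x)$ in order to get $c_2>0$. You flag these as points ``requiring verification,'' but they are not routine verifications: they are the core difficulty, and the route you propose for them does not go through.

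The paper circumvents the local topology at $0$ entirely by an LS-category bookkeeping argument run in the opposite direction. Suppose $\mathrm{cat}(\tilde F^{-1}(x))<\infty$, with contractible open sets $U_1,\dots,U_k$ covering $\tilde F^{-1}(x)$. One may assume there is no sequence of critical values tending to $0$ (otherwise we already have infinitely many critical points), so there is $c>0$ with no critical values in $(0,c)$; the deformation lemma applied to a pseudo-gradient field for $f$ on the smooth manifold $\tilde F^{-1}(x)$, extended by the constant map at $0$, then deformation retracts $U_0=\Omega(x)\cap\{f<c\}$ onto $\{0\}$ \emph{inside $\Omega(x)$}. Thus $U_0,U_1,\dots,U_k$ would be a finite categorical cover of the full space $\Omega(x)$, contradicting Corollary \ref{cor:lust} (via the Hurewicz fibration property, $\Omega(x)$ has the same homotopy type, hence the same category, as a regular fiber). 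Note that the contractibility used here is of the \emph{unpunctured} sublevel set, to the point $0\in\Omega(x)$, obtained dynamically from the flow rather than from any local chart --- this is what makes the argument work where the ``puncture a manifold'' heuristic fails. If you want to salvage your version, you should replace the homotopy-equivalence claim for $\Omega^*(x)$ with this contradiction argument.
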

\begin{proof}We prove that $J_p$ (with $p>1$) has infinitely many critical points when restricted to each $\Omega(y)$. Because of Theorem \ref{thm:critical} the only case that we have to cover is the case the final point $y$ is the same point as the initial point $x$ (in which case it is not a regular value for $F$). 

Recall that on a contact manifold there are no \emph{nontrivial} abnormal extremals (i.e. critical points of the Endpoint map), see \cite[Corollary 4.3.5]{AgrachevBarilariBoscain}, the trivial one being the one with zero control.

The case when the base manifold is not simply connected can be treated as in the proof of Theorem \ref{thm:critical}: if the fundamental group is infinite, then only one of the infinitely many components of $\Omega(x)$ contains the zero control; if the fundamental group is finite, we pass to the universal cover (which is still compact) and notice that the projection of a geodesic is still a geodesic (no matter if it is a singular point of the Endpoint map, as in the subriemannian case geodesics are locally length minimizers and length is preserved by projection). 

Thus we assume our manifold $M$ is compact and simply connected. Consider $\tilde{F}$, the restriction to $L^p\backslash\{0\}$ of the Endpoint map centered at $x$. Then, again by \cite[Corollary 4.3.5]{AgrachevBarilariBoscain}, $\tilde{F}^{-1}(x)$ is a smooth Banach manifold and:
\be \Omega(x)=\tilde{F}^{-1}(x)\cup\{0\}\ee
($ \Omega(x)$ has its only singularity at zero). 

We prove that the Lusternik-Schnirelmann category of $\tilde{F}^{-1}(x)$ is infinite. Combining this with the fact that the $p$-Energy $f:\tilde{F}^{-1}(x)\to \R$ is $C^1$ and satisfies Palais-Smale for every level $c>0$, implies that $f$ has infinitely many critical points.

Assume that the Lusternik-Schnirelmann category of $\tilde{F}^{-1}(x)$ is finite and let $U_1,\ldots, U_k$ be contractible open sets covering $\tilde{F}^{-1}(x)$. We show that a sufficiently small neighborhood $U_0$ of $0\in \Omega(x)$ is contractible in $\Omega(x)$: this would imply that the Lusternik-Schnirelmann category of $\Omega(x)$ is finite as well, which contradicts Corollary \ref{cor:lust} (all spaces $\Omega(y)$, regardless $y$, are homotopy equivalent since the endpoint map is a Hurewicz fibration hence they all have the same L-S category). Let now $c>0$  be such that $f$ has no critical values in $(0,c)$. Notice that if there is a sequence $\{c_n\}_{n\in \mathbb{N}}$ of critical values of $f$ converging to zero, then we immediately have infinitely many critical points so we can assume such a sequence does not exist, which implies the existence of $c>0$. Consider the open set $U_0=\Omega(y)\cap\{f<c\}$ and a pseudo-gradient vector field $\tilde X\in \textrm{Vec}(\tilde{F}^{-1}(x))$ for $f$ (the existence of such a vector field is guaranteed by \cite[Theorem 3.1]{Chang}). By the deformation Lemma \cite[Lemma 3.2]{Chang} the (semi)-flow of $\tilde{X}$ (we extend this flow to a constant map at zero) deformation retracts $U_0$ to $0\in \Omega(x).$ 
\end{proof}

\subsection{The subriemannian loop space has the homotopy type of a CW-complex}
\begin{thm}\label{thm:CW}Assume the endpoint map for a subriemannian manifold has \emph{at least} one regular value. Then the space $\Omega(y)$ with the $W^{1,2}$ topology has the homotopy type of a CW-complex.
\end{thm}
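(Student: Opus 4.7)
The plan is to reduce the statement about an arbitrary fiber $\Omega(y)$ to a statement about a single regular fiber $\Omega(y_0)$, and then to invoke a classical theorem of Palais on infinite-dimensional manifolds modeled on Hilbert spaces.

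First I would apply Theorem \ref{thm:sarychev} with $p=2$: since $X_0=0$ we have $p_c=\infty$, so $F\colon\Omega\to M$ is a Hurewicz fibration in the $W^{1,2}$ topology. By the remark immediately following Theorem \ref{thm:sarychev}, all fibers $\Omega(y)$ (even the singular ones) then share the same homotopy type. Consequently it suffices to prove that one particular fiber has the homotopy type of a CW-complex, and I would choose the regular fiber $\Omega(y_0)$ whose existence is assumed.

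Next I would observe that $\Omega(y_0)$ is itself a well-behaved infinite-dimensional manifold. Indeed, $F$ is smooth in the $W^{1,2}$ topology (the result of \cite{AgrachevBarilariBoscain} recalled in the introduction), so the implicit function theorem realizes $\Omega(y_0)=F^{-1}(y_0)$ as a smooth Banach submanifold of $\Omega\subset L^2([0,1],\R^d)$, locally modeled on $\ker d_u F$. This kernel is a closed subspace of $L^2$ of codimension $n=\dim M$, hence itself a separable Hilbert space; paracompactness and separability of $\Omega(y_0)$ are inherited from the ambient metric space $L^2$.

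Finally I would invoke Palais's theorem (from \emph{Homotopy theory of infinite-dimensional manifolds}, Topology~5 (1966)): every paracompact $C^1$ manifold modeled on a separable Hilbert space is an ANR, and hence by Milnor's theorem on ANRs it has the homotopy type of a CW-complex. Applied to $\Omega(y_0)$ and combined with the first step, this delivers the statement for every fiber. The main conceptual point to watch is that fibers over critical values need not themselves be manifolds, so a direct manifold-theoretic argument fails for them; the Hurewicz fibration property is exactly what bridges this gap and propagates the CW-homotopy type from the single regular fiber to all of $M$.
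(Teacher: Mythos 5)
Your argument is exactly the paper's: reduce to a regular fiber via the Hurewicz fibration property of Theorem \ref{thm:sarychev} (all fibers share a homotopy type), observe that the regular fiber is a Hilbert manifold by the implicit function theorem, and apply Palais's theorem that Hilbert manifolds have the homotopy type of CW-complexes. The proposal is correct and matches the paper's proof, merely spelling out the details the paper leaves implicit.
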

\begin{proof}%Let us pick an $y$ which is a regular value of the Endpoint map and of the subriemannian exponential map centered at $x$. \comm{Again, are we able to pick such $y$? The set of regular values of the exponential map has full measure (Sard's lemma) and for the endpoint? Essentially we simply want an $y$ which is not conjugate along any geodesic sorting from $x$ and that $y$ is a regular value of the Endpoint map centered at $x$}
%Then the function $f=J_2|_{\Omega(y)}$ is a Morse function on the Hilbert manifold $\Omega(y)$: it satisfies the Palais-Smale condition, critical points are nondenegerate (\cite[Proposition 7.17 and Proposition 7.20]{AgrachevBarilariBoscain}) and the Morse index of every critical point (the projection of a \emph{normal} extremal) can be computed as the Maslov index of a curve in the Lagrangian grasmannian \cite{BarilariLerario, AgrachevBarilariBoscain}. Hence this space has the homotopy type of a CW-complex.
Every Hilbert manifold has the homotopy type of a countable CW-complex \cite{Palaisinfinite}. In particular this is true for the space $\Omega(y')$ if $y'$ is a regular value of the Endpoint map. All spaces $\Omega(y)$ as $y$ varies on $M$ have the same homotopy type (as a consequence of Theorem \ref{thm:sarychev}), hence the result follows.
\end{proof}
\begin{cor}\label{cor:deformation}If the endpoint map for a subriemannian manifolds has at least one regular value, the inclusion of the horizontal path space in the ordinary one is a homotopy equivalence (both spaces endowed with the $W^{1,2}$ topologies). 
\end{cor}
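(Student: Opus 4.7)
The plan is to apply J.H.C.~Whitehead's theorem: a weak homotopy equivalence between spaces having the homotopy type of a CW-complex is an honest homotopy equivalence. Write $i:\Omega(y)\hookrightarrow \Omega_{\mathrm{ord}}(y)$ for the inclusion of the horizontal $W^{1,2}$-loop space into the ordinary one (here $\Omega_{\mathrm{ord}}(y)$ denotes the space of all $W^{1,2}$-curves from $x$ to $y$ on $M$, with no horizontality constraint). Two things then have to be checked: (a) both spaces have the homotopy type of a CW-complex; (b) $i$ is a weak homotopy equivalence.

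For (a): the standing assumption that $F$ has at least one regular value puts us exactly in the setting of Theorem \ref{thm:CW}, so $\Omega(y)$ has the homotopy type of a CW-complex. The ordinary loop space $\Omega_{\mathrm{ord}}(y)$ is a smooth Hilbert manifold (modeled locally on an $L^{2}$-space of vector fields along a reference curve via the Riemannian exponential), so the same conclusion follows directly from Palais's theorem \cite{Palaisinfinite}, already invoked in the proof of Theorem \ref{thm:CW}.

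For (b) I would compare the horizontal and ordinary path-space fibrations. The horizontal endpoint map $F:\Omega\to M$ is a Hurewicz fibration for the $W^{1,2}$ topology by Theorem \ref{thm:sarychev} (recall $p_{c}=\infty$ in the subriemannian case), and the ordinary endpoint map $F_{\mathrm{ord}}:\Omega_{\mathrm{ord}}\to M$ is a smooth submersion between Banach manifolds, which is classically a Hurewicz fibration. The inclusion of horizontal curves into ordinary ones yields a morphism of fibrations over the identity of $M$, restricting to $i$ on the fibers over $y$. Crucially, both total spaces are contractible: the linear rescaling $H(u,s)=(1-s)u$ of controls preserves horizontality when $X_{0}=0$, so it deformation-retracts $\Omega$ onto the zero control, and the same formula handles $\Omega_{\mathrm{ord}}$. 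Consequently, in the long exact sequences of homotopy groups of the two fibrations, the connecting homomorphisms are isomorphisms $\pi_{k}(\Omega(y))\simeq\pi_{k+1}(M)$ and $\pi_{k}(\Omega_{\mathrm{ord}}(y))\simeq\pi_{k+1}(M)$, and naturality of the long exact sequence forces these identifications to commute with $i_{*}$. Hence $i_{*}$ is an isomorphism in every degree.

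Combining (a) and (b), Whitehead's theorem delivers the desired homotopy equivalence. The only mildly non-routine point is the claim that $F_{\mathrm{ord}}$ is a Hurewicz fibration in the $W^{1,2}$ category; I would settle it by exhibiting explicit local trivializations over small geodesic balls in $M$, using parallel transport for the Levi-Civita connection together with the reparametrized concatenation map of Proposition \ref{prop:rescaling}, essentially repeating the proof of Theorem \ref{thm:sarychev} with the nonholonomic constraint dropped.
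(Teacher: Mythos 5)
Your proposal is correct and follows essentially the same route as the paper: the paper's proof simply cites Theorem \ref{thm:sarychev} for the weak homotopy equivalence of the inclusion, Theorem \ref{thm:CW} (plus Palais) for the CW homotopy type of both spaces, and concludes by Whitehead's theorem. The only difference is that you spell out the fibration-comparison argument behind the weak equivalence (contractible total spaces, naturality of the long exact sequences), which the paper leaves implicit.
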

\begin{proof}The inclusion of the horizontal path space in the ordinary one is a \emph{weak} homotopy equivalence for the $W^{1,2}$ topologies (by Theorem \ref{thm:sarychev}). Since both spaces have the homotopy type of CW-complexes (by Theorem \ref{thm:CW}) the result follows from Whitehead's theorem \cite[Theorem 4.5]{Hatcher}.
\end{proof}
\section{Appendix}
In this section we collect a list of technical results that we use in the proofs. Most of these results are well known to experts, but it is often not easy to find an appropriate reference. Some proofs are adaptations from \cite{Trelat} to the general case $p\in (1, \infty)$. 
\begin{lemma}[Gronwall inequality]
		Assume $\varphi:[0,T]\to\R$ to be a bounded nonnegative measurable function, $\alpha:[0,T]\to\R$ to be a nonnegative integrable function and $B:[0,T]\to\R$ to be non decreasing such that
		\be \varphi(t)\leq B(t)+\int_0^t\alpha(\tau)\varphi(\tau)d\tau,\quad\forall t\in[0,T];\ee
		then 
		\be\varphi(t)\leq B(t)e^{\int_0^t\alpha(\tau)d\tau},\quad\forall t\in[0,T].\ee
	\end{lemma}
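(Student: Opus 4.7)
The plan is to prove Gronwall's inequality by iterating the hypothesis, which is the most elementary route and handles the stated (merely integrable) regularity of $\alpha$ cleanly. Write $(Tf)(t)=B(t)+\int_0^t\alpha(\tau)f(\tau)\,d\tau$, so the assumption reads $\varphi\le T\varphi$ pointwise. Since $\alpha\ge 0$, the operator $T$ is monotone, hence by induction $\varphi\le T^n\varphi$ for every $n\ge 0$, and the problem reduces to computing $T^n\varphi$ and letting $n\to\infty$.

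Next I would expand $T^n\varphi(t)$ as an iterated integral. Using the hypothesis $B$ nondecreasing together with Fubini's theorem, the $k$-th iterate of the kernel $\alpha$ over the simplex $\{0\le s_k\le\cdots\le s_1\le t\}$ equals $\frac{1}{k!}\bigl(\int_0^t\alpha(\tau)\,d\tau\bigr)^k$; this is the standard computation for iterated convolutions of a scalar kernel and can be verified by induction on $k$. Therefore
\begin{equation}
T^n\varphi(t)\;\le\; B(t)\sum_{k=0}^{n-1}\frac{1}{k!}\left(\int_0^t\alpha(\tau)\,d\tau\right)^{k}\;+\;R_n(t),
\end{equation}
where the remainder is
\begin{equation}
R_n(t)=\int_0^t\!\!\int_0^{s_1}\!\!\cdots\!\int_0^{s_{n-1}}\alpha(s_1)\cdots\alpha(s_n)\,\varphi(s_n)\,ds_n\cdots ds_1\;\le\;\|\varphi\|_\infty\,\frac{1}{n!}\left(\int_0^t\alpha(\tau)\,d\tau\right)^{n}.
\end{equation}

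Finally, since $\varphi$ is assumed bounded on $[0,T]$ and $\int_0^t\alpha<\infty$, we have $R_n(t)\to 0$ as $n\to\infty$, while the partial sum converges to $\exp\bigl(\int_0^t\alpha(\tau)\,d\tau\bigr)$. Passing to the limit in $\varphi(t)\le T^n\varphi(t)$ yields the claimed estimate
\begin{equation}
\varphi(t)\;\le\; B(t)\,\exp\!\left(\int_0^t\alpha(\tau)\,d\tau\right),\quad \forall t\in[0,T].
\end{equation}

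The only subtle point (which I would check carefully, but not belabor) is the appeal to Fubini on the simplex and the monotonicity step that replaces each factor $B(s_j)$ by $B(t)$; this requires precisely the assumptions that $\alpha\ge 0$ and $B$ is nondecreasing, which are in the hypothesis. The boundedness of $\varphi$ is used only to make $R_n\to 0$; if one wanted to weaken boundedness one could instead run an integrating-factor argument on $\psi(t):=\int_0^t\alpha\varphi$, which is absolutely continuous because $\alpha\varphi\in L^1$, but the iteration approach above is the shortest given the stated hypotheses.
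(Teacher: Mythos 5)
Your proof is correct. Note that the paper states this Gronwall lemma in the Appendix without any proof at all (it is invoked as a standard fact in Proposition \ref{prop:domain}, Theorem \ref{thm:unif_conv}, and Lemma \ref{lemma:6}), so there is no argument of the authors' to compare against; your Picard-iteration argument is a complete, self-contained substitute. The two hypotheses you identify as load-bearing are exactly right: $\alpha\geq 0$ gives monotonicity of the operator $T$ (so $\varphi\leq T^n\varphi$ by induction) and, via Tonelli and the symmetry of the product kernel on the simplex, the identity $\int_{\{0\leq s_k\leq\cdots\leq s_1\leq t\}}\alpha(s_1)\cdots\alpha(s_k)\,ds=\frac{1}{k!}\bigl(\int_0^t\alpha\bigr)^k$; the monotonicity of $B$ lets you replace each $B(s_k)$ by $B(t)$; and the boundedness of $\varphi$ together with $\alpha\in L^1$ kills the remainder $R_n$. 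One cosmetic remark: the sentence attributing the simplex identity to ``$B$ nondecreasing together with Fubini'' momentarily conflates the two separate steps (the kernel identity needs only Tonelli and symmetry, not $B$), but you disentangle them correctly in your closing paragraph, so nothing is actually missing.
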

	
	\begin{prop}\label{prop:domain}
		Let $T>0$ be fixed. Then the domain of the endpoint map is open in $L^p([0,T],\R^d)$.
	\end{prop}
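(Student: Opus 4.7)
The plan is to show that if $u_0$ lies in the domain of the endpoint map, then a sufficiently small $L^p$-ball around $u_0$ is still contained in the domain. I would argue via continuous dependence on the control combined with a bootstrap on the maximal interval of existence.

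First I would reduce to a Euclidean setting. Let $\gamma_0:[0,T]\to M$ be the trajectory associated to $u_0$; its image is compact. After choosing a Whitney embedding $M\hookrightarrow \R^N$ and extending $X_0,X_1,\ldots,X_d$ to compactly supported smooth fields on $\R^N$ (trajectories starting on $M$ stay on $M$, so the dynamics are unchanged), one may assume everything takes place in a relatively compact open set $K\subset\R^N$ on which each $X_i$ is globally Lipschitz with constant $L$ and globally bounded by $M_0$. Fix an open $K'$ with $\gamma_0([0,T])\subset K'$ and $\overline{K'}\subset K$, and set $\delta=\mathrm{dist}(\gamma_0([0,T]),\partial K')>0$.

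Given a control $u\in L^p([0,T],\R^d)$ close to $u_0$, let $\tau_u\in(0,T]$ be the maximal time such that $\gamma_u$ exists on $[0,\tau_u)$ and stays in $K'$. Subtracting the two integral equations and splitting $u_iX_i(\gamma_u)-u_{0,i}X_i(\gamma_0)=u_i(X_i(\gamma_u)-X_i(\gamma_0))+(u_i-u_{0,i})X_i(\gamma_0)$ one obtains on $[0,\tau_u)$
\be |\gamma_u(t)-\gamma_0(t)|\leq L\int_0^t(1+|u(s)|)\,|\gamma_u(s)-\gamma_0(s)|\,\ud s+M_0\int_0^t|u(s)-u_0(s)|\,\ud s.\ee
Applying the Gronwall lemma with $\alpha(s)=L(1+|u(s)|)\in L^1$ and the nondecreasing $B(t)=M_0\int_0^t|u-u_0|\,\ud s$, this yields
\be |\gamma_u(t)-\gamma_0(t)|\leq M_0\|u-u_0\|_1\exp\bigl(LT+L\|u\|_1\bigr),\qquad t\in[0,\tau_u).\ee
H\"older's inequality gives $\|u-u_0\|_1\leq T^{1/q}\|u-u_0\|_p$ and keeps $\|u\|_1$ uniformly controlled in a small $L^p$-ball around $u_0$.

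Choosing $\|u-u_0\|_p$ small enough therefore forces the right-hand side to be strictly less than $\delta$, so $\gamma_u$ stays in the interior of $K'$ on $[0,\tau_u)$. The standard ODE continuation argument then rules out $\tau_u<T$ (the curve would have to leave $\overline{K'}$ at time $\tau_u$, contradicting the estimate), hence $\tau_u=T$ and $u$ lies in the domain of the endpoint map. The main obstacle is the nonlinear coupling between the control and the trajectory in the bilinear term $u_iX_i(\gamma_u)$: a priori $X_i(\gamma_u)$ is only defined while $\gamma_u$ has not escaped the ambient compact set, so the Gronwall estimate is initially circular, and the role of the maximal time $\tau_u$ together with the smallness of $\|u-u_0\|_p$ is precisely to break this circularity.
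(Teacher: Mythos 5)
Your proof is correct, and its analytic core is the same as the paper's: subtract the two integral identities, split the bilinear term as $u_iX_i(\gamma_u)-u_{0,i}X_i(\gamma_0)=u_i(X_i(\gamma_u)-X_i(\gamma_0))+(u_i-u_{0,i})X_i(\gamma_0)$, bound the inhomogeneous term by $T^{1/q}\|u-u_0\|_p$ via H\"older, and close with Gronwall using the $L^1$ weight $L(1+|u|)$. Where you genuinely depart from the paper is in the two reductions surrounding this estimate. The paper localizes by taking $T$ small so as to work in a single coordinate chart with compactly supported fields (leaving implicit both the patching over $[0,T]$ and the a priori existence of $\gamma_v$ on the whole interval), whereas you embed $M$ in $\R^N$, confine the analysis to a compact neighborhood $K'$ of $\gamma_0([0,T])$, and introduce the maximal time $\tau_u$ together with a continuation argument to rule out blow-up or escape from $K'$. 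This buys a genuinely complete, non-circular argument on the full interval in one pass -- the circularity you point out (the Gronwall estimate presupposes that $\gamma_u$ has not yet left the region where the Lipschitz bounds hold) is real and is exactly what the paper's write-up glosses over. The only points to polish are cosmetic: choose the extension of the $X_i$ tangent to the embedded copy of $M$ so that uniqueness keeps trajectories on $M$, and rename the bound $M_0$ on the fields to avoid clashing with the manifold $M$.
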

	
	\begin{proof}
		The strategy of the proof consists in showing that if $v$ belongs to a sufficiently small neighborhood of $u$ in $L^p([0,T],\R^d)$, then the corresponding trajectories $\gamma_u$ and $\gamma_v$ remain uniformly close. It is not restrictive to prove the theorem for small $T>0$, which in turn allows us to work inside a coordinate chart. Also, we assume that the vector fields $X_i$, $i=0,1,\dotso, d$ have compact support in $\R^n$; Lemma $3.2$ in the aforementioned paper yields that they are therefore globally Lipschitzian. For any $t\in[0,T]$ we have the following:
		\begin{alignat}{9}
		   \|\gamma_u(t)-\gamma_v(t)\|&\leq\bigg\|&&\int_0^t(X_0(\gamma_u(\tau))-X_0(\gamma_v(\tau)))d\tau\\
		                              &&&+\int_0^t\sum_{i=1}^dv_i(\tau)(X_i(\gamma_u(\tau))-X_i(\gamma_v(\tau)))d\tau\\
		                              &&&-\int_0^t\sum_{i=1}^d(v_i(\tau)-u_i(\tau))X_i(\gamma_u(\tau))d\tau\bigg\|\\
		                              &\leq C&&\int_0^t(1+\sum_{i=1}^d|u_i(\tau)|)\|\gamma_u(\tau)-\gamma_v(\tau)\|d\tau+h_v(t),
		\end{alignat}
		with 
		\be h_v(t)=\left\|\int_0^t\sum_{i=1}^d(v_i(\tau)-u_i(\tau))X_i(\gamma_u(\tau))d\tau\right\|.\ee By H\"older inequality we obtain
		\be h_v(t)\leq C'T^{1/q}\|u-v\|_p,\quad\forall t\in[0,T];\ee
		moreover we deduce that for any $\varepsilon>0$ there exists a neighborhood $U$ of $u$ in $L^p([0,T],\R^d)$ such that $h_v(t)\leq \varepsilon$, for any $v\in U$ and $t\in[0,T]$. We conclude using Gronwall inequality that 
		\be\|\gamma_u(t)-\gamma_u(v)\|\leq \varepsilon e^{C(T+T^{1/q}K)},\quad\forall t\in[0,T].\ee
	\end{proof}	
	
	\begin{thm}
		\label{thm:unif_conv}
		Let $u=(u_1,\dotso,u_d)\in L^p([0,T],\R^d)$ be a control in the domain of the endpoint map $F$, and let $\gamma_u$ be the corresponding solution to \eqref{eqn:1}. Let $(u_n)_{n\in\mathbb N}$ be a sequence in $L^p([0,T],\R^d)$. If $u_n\stackrel{L^p}{\rightharpoonup} u$, then for $n$ large enough $\gamma_{u_n}$ is well-defined on $[0,T]$ and moreover $\gamma_{u_n}$ converges to $\gamma_u$, uniformly on $[0,T]$.
	\end{thm}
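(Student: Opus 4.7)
The plan is to combine an Arzel\`a--Ascoli compactness argument for the trajectories with a weak-strong convergence argument for the control-dependent term of the ODE, and then identify all subsequential limits with $\gamma_u$ via the uniqueness of solutions.

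First I would reduce to a local, compactly-supported setting. Since the estimates we need are local in space and time, we can work inside a single coordinate chart containing $\gamma_u([0,T])$, and smoothly cut off each $X_i$ outside a slightly larger compact set $K\subset\R^n$. With this reduction the $X_i$ are uniformly bounded and globally Lipschitz, so the Cauchy problem driven by any $v\in L^p([0,T],\R^d)$ admits a unique solution defined on all of $[0,T]$. In particular each $\gamma_{u_n}$ is defined on all of $[0,T]$; once we prove $\gamma_{u_n}\to \gamma_u$ uniformly, for large $n$ the trajectories remain in the region where the cutoff agrees with the original vector fields, justifying the reduction.

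Next I would derive a uniform equicontinuity estimate. Since $u_n\rightharpoonup u$ in $L^p$, the Banach--Steinhaus theorem gives $\sup_n\|u_n\|_p\leq M<\infty$. Writing $C$ for a uniform bound on $\|X_i\|_\infty$, H\"older's inequality yields, for $0\le s<t\le T$,
\be
\|\gamma_{u_n}(t)-\gamma_{u_n}(s)\|\le C|t-s|+C\,\|u_n\|_p\,|t-s|^{1/q}\le C|t-s|+CM|t-s|^{1/q},
\ee
so $\{\gamma_{u_n}\}$ is uniformly bounded and uniformly H\"older continuous on $[0,T]$. By Arzel\`a--Ascoli, every subsequence of $\{\gamma_{u_n}\}$ admits a further subsequence (still denoted $\gamma_{u_n}$) converging uniformly on $[0,T]$ to some continuous curve $\gamma$.

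The key step is then to pass to the limit in the integral form of \eqref{eq:control} and identify $\gamma=\gamma_u$. The drift term $\int_0^t X_0(\gamma_{u_n})\,d\tau$ converges to $\int_0^tX_0(\gamma)\,d\tau$ by uniform convergence and continuity of $X_0$. For each $i=1,\dots,d$ I would split
\be
\int_0^t u_{n,i}(\tau)X_i(\gamma_{u_n}(\tau))\,d\tau=\int_0^t u_{n,i}(\tau)\bigl[X_i(\gamma_{u_n}(\tau))-X_i(\gamma(\tau))\bigr]d\tau+\int_0^t u_{n,i}(\tau)X_i(\gamma(\tau))\,d\tau.
\ee
The first summand is bounded in absolute value by $M\,T^{1/q}\|X_i\circ\gamma_{u_n}-X_i\circ\gamma\|_\infty$, and this vanishes because $X_i$ is Lipschitz and $\gamma_{u_n}\to\gamma$ uniformly. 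For the second summand, $X_i(\gamma(\cdot))\chi_{[0,t]}$ belongs to $L^q$ (since $X_i$ is bounded and $T<\infty$), so weak convergence $u_{n,i}\rightharpoonup u_i$ in $L^p$ forces the integral to converge to $\int_0^t u_i(\tau)X_i(\gamma(\tau))\,d\tau$. Passing to the limit we obtain that $\gamma$ satisfies the same ODE \eqref{eq:control} as $\gamma_u$ with the same initial datum; uniqueness of solutions (guaranteed by the global Lipschitz property after cutoff) gives $\gamma=\gamma_u$. Since every subsequence admits a sub-subsequence converging uniformly to the same limit $\gamma_u$, the full sequence $\gamma_{u_n}$ converges uniformly to $\gamma_u$.

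The main obstacle is the nonlinear term $u_{n,i}X_i(\gamma_{u_n})$, where we must mix the weak convergence of the controls with the (not a priori available) strong convergence of $X_i\circ\gamma_{u_n}$; the trick is a bootstrap---compactness from the H\"older bound gives the uniform convergence of the trajectories for free, and this is exactly the strong convergence needed to pair with the weakly convergent controls.
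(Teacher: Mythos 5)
Your proof is correct, but it follows a genuinely different route from the paper's. The paper writes the difference $\gamma_u-\gamma_{u_n}$ in integral form, isolates a source term $h_n(t)$ obtained by pairing $u_{n}-u$ against $X_i(\gamma_u(\cdot))\chi_{[0,t]}\in L^q$, shows that $h_n\to 0$ \emph{uniformly} by combining pointwise convergence (from weak convergence) with the uniform $1/q$-H\"older bound of Lemma \ref{lemma:unif_conv_hol}, and then closes the estimate with Gronwall's inequality --- no subsequences, and a quantitative bound $\|\gamma_u-\gamma_{u_n}\|_\infty\lesssim \sup_t h_n(t)$ that is reused verbatim in Lemma \ref{lemma:6}. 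You instead run a compactness-and-identification argument: Banach--Steinhaus gives the uniform $L^p$ bound, hence equi-H\"olderianity of the trajectories, Arzel\`a--Ascoli extracts uniform subsequential limits, and you identify every limit with $\gamma_u$ by passing to the limit in the integral equation (pairing the weakly convergent controls against the now strongly convergent $X_i\circ\gamma_{u_n}$) and invoking uniqueness; the subsequence trick then upgrades this to the full sequence. Your scheme is arguably the more standard one for weak-convergence stability of ODEs and it uses weak convergence only in its ``pair against a fixed $L^q$ function'' form, which is exactly what is available; the price is the loss of an explicit rate and the extra appeal to uniqueness of Carath\'eodory solutions. One small point to tighten: $\gamma_u([0,T])$ need not lie in a single coordinate chart, so either subdivide $[0,T]$ into short intervals (as the paper does) and iterate, or embed $M$ in some $\R^N$ before cutting off the vector fields; with that adjustment your reduction, and hence the whole argument, goes through.
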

	
	\begin{proof}
		It suffices to prove the proposition when $T$ is close to zero; this in turn permits to work in a coordinate chart, that is we may suppose the vector fields $X_i$ to have compact support in $\R^n$. Moreover, let $K$ be a compact neighborhood of $x$ such that there exists $C>0$ for which
		\[
		  \|X_i(z_1)-X_i(z_2)\|\leq C\|z_1-z_2\|
		\]
		holds for any $z_1,z_2\in K$ and any $i=0,1,\dotso,d$. For all $t\in[0,T]$ we have:
		\begin{align}
		  \|\gamma_u(t)-\gamma_{u_n}(t)\|&\leq\int_0^t\|(X_0(\gamma_u(\tau))-X_0(\gamma_{u_n}(\tau))\|d\tau\\
		  &+\int_0^t\sum_{i=1}^d|u_{n,i}(\tau)|\|X_i(\gamma_u(\tau))-X_i(\gamma_{u_n}(\tau))\|d\tau\\
		  &+\int_0^t\sum_{i=1}^d|u_{n,i}(\tau)-u_i(\tau)|\|X_i(\gamma_u(\tau))\|d\tau\\
		  &\leq C\int_0^1(1+\sum_{i=1}^d|u_{n,i}(\tau)|)\|\gamma_u(\tau)-\gamma_{u_n}(\tau)\|d\tau+h_n(t),
		\end{align}
		where 
		\be\label{eqn:2}h_n(t)=\int_0^t\sum_{i=1}^d|u_{n,i}(\tau)-u_i(\tau)|\|X_i(\gamma_u(\tau))\|d\tau.\ee
		The uniform boundedness principle of Banach and Steinhaus ensures that $\sup_{n\in\mathbb N}\|u_n\|_p\leq M$; if we can prove that $h_n$ tends \emph{uniformly} on $[0,T]$ to the zero function, then we would finish the argument using the Gronwall inequality.
		
		Observe that $h_n$ tends pointwise to the zero function; it is also uniformly $1/q$-H\"olderian, where $q=\frac{p}{p-1}$, indeed if $L=\sup_i\sup_{p\in\R^n}\|X_i(p)\|$ we have
		\begin{align}
		\label{eqn:3a}
		\|h_n(t_1)-h_n(t_2)\|&\leq L\int_{t_1}^{t_2}\sum_{i=1}^d(|u_{n,i}(\tau)|+|u_i(\tau)|)d\tau\\
		&\leq L(M+\|u\|_p)|t_1-t_2|^{1/q}.
		\end{align}
		The proof is then concluded by the next lemma.
	\end{proof}
	
	\begin{lemma}[Uniform convergence of H\"olderian maps]
		\label{lemma:unif_conv_hol}
		Let $\{f_k\}_{k\in\mathbb N}:[a,b]\to\R^n$ be a uniformly $\alpha$-H\"olderian sequence of functions which converges pointwise to a limit function $f$. Then $f$ is $\alpha$-H\"olderian and $f_k\to f$ uniformly on $[a,b]$.
	\end{lemma}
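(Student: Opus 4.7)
My plan is to prove this lemma by a standard equicontinuity-plus-finite-net argument; this is essentially a mild strengthening of the Arzelà--Ascoli theorem adapted to the Hölder setting.

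First, I would verify that $f$ inherits the Hölder estimate. Let $C$ be a constant such that $\|f_k(s)-f_k(t)\| \le C|s-t|^\alpha$ for all $k$ and all $s,t\in[a,b]$. Fixing $s,t$ and letting $k\to\infty$ in this inequality, the pointwise convergence $f_k\to f$ yields $\|f(s)-f(t)\|\le C|s-t|^\alpha$. So $f$ is $\alpha$-H\"olderian with the same constant $C$.

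Next, for uniform convergence, fix $\varepsilon>0$ and choose $\delta>0$ with $C\delta^\alpha<\varepsilon/3$. Cover $[a,b]$ by finitely many points $x_1,\dots,x_N$ such that every $x\in[a,b]$ lies within distance $\delta$ of some $x_i$. Since $f_k(x_i)\to f(x_i)$ pointwise for each $i=1,\dots,N$, and there are only finitely many such points, there exists $K\in\mathbb{N}$ with $\|f_k(x_i)-f(x_i)\|<\varepsilon/3$ for all $k\ge K$ and all $i$. Now for arbitrary $x\in[a,b]$, pick $x_i$ with $|x-x_i|<\delta$ and estimate
\begin{align}
\|f_k(x)-f(x)\| &\le \|f_k(x)-f_k(x_i)\| + \|f_k(x_i)-f(x_i)\| + \|f(x_i)-f(x)\|\\
&\le C\delta^\alpha + \varepsilon/3 + C\delta^\alpha < \varepsilon.
\end{align}
Since the bound is uniform in $x$, this shows $\|f_k-f\|_\infty\to 0$.

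No single step is an obstacle; the whole argument hinges on the fact that uniform H\"older control provides equicontinuity strong enough that pointwise convergence on a finite $\delta$-net propagates to uniform convergence on the whole interval, with the density of the net dictated only by the fixed constant $C$ and the fixed exponent $\alpha$. The compactness of $[a,b]$ is used exactly once, to extract a finite $\delta$-net.
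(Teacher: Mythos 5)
Your proof is correct and follows essentially the same route as the paper's: pass to the limit in the H\"older inequality to get the estimate for $f$, then combine a finite $\delta$-net (with $\delta$ chosen from the H\"older constant and exponent) with pointwise convergence at the net points and the triangle inequality. No issues.
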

	
	\begin{proof}
		The relation $\|f_k(x)-f_k(y)\|\leq M|x-y|^\alpha$ immediately yields that the limit function $f$ is also $\alpha$-H\"olderian.
		
		Next, let $\varepsilon>0$ be arbitrary and let accordingly $\rho=\left(\frac{\varepsilon}{3M}\right)^{1/\alpha}$. As $[a,b]$ is compact, it can be covered by a finite collection $\{B_i\}_{i=1}^l$ of balls of radius $\rho$, whose centers will be denoted by $x_i$; this means that for any $x\in[a,b]$ there exists $i\in\{1,\dotso,l\}$ such that $|x-x_i|\leq \rho$. Let $K\in\mathbb N$ be such that $\|f_k(x_i)-f(x_i)\|\leq \varepsilon/3$ for all $i=1,\dotso,l$ if $k>K$. The following holds true for $k\in\mathbb N$ sufficiently large:
		\begin{align}
		   \label{eqn:4}
		   \|f_k(x)-f(x)\|&\leq \|f_k(x)-f_k(x_i)\|+\|f_k(x_i)-f(x_i)\|+\|f(x_i)-f(x)\|\\
		   &\leq 2M|x-x_i|^\alpha+\frac{\varepsilon}{3}\leq \varepsilon,
		\end{align}
		and this finishes the proof.
	\end{proof}
	
	We turn now to the issue of the differentiability of the endpoint map $F$, i.e. we want to determine its Fr\'echet differential and prove some of its continuity properties.
	
	\begin{prop}
		Let $u$ be in the domain of the endpoint map $F:L^p([0,T],\R^d)$ and let $\gamma_u$ be the associated trajectory. Then for any bounded neighborhood $U$ of $u$ in $L^p([0,T],\R^d)$, there exists a constant $C=C(U)$ such that whenever $v,w\in U$ and $t\in[0,T]$ we have
		\be\|\gamma_v(t)-\gamma_w(t)\|\leq C\|v-w\|_p.\ee
	\end{prop}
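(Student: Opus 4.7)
The plan is to mimic very closely the Gronwall-based arguments already used in the proof of Proposition \ref{prop:domain} and Theorem \ref{thm:unif_conv}, the only novelty being that we must produce a Lipschitz constant which is \emph{uniform} on the bounded neighbourhood $U$.

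First, as in those proofs I would reduce to the case of small $T>0$, work inside a coordinate chart, and replace the vector fields $X_0,\dots,X_d$ by compactly supported representatives; this way they become bounded by some $L>0$ and globally Lipschitz with a common constant $C$. I also note that because $U$ is bounded in $L^p$, the uniform convergence theorem (Theorem \ref{thm:unif_conv}) plus a compactness argument shows that all trajectories $\{\gamma_v\}_{v\in U}$ stay in a common compact set for $t\in[0,T]$, so using globally Lipschitz representatives does not alter the values of $\gamma_v,\gamma_w$.

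Next, for $v,w\in U$ and $t\in[0,T]$ I would write
\begin{align}
\gamma_v(t)-\gamma_w(t)&=\int_0^t\bigl(X_0(\gamma_v(\tau))-X_0(\gamma_w(\tau))\bigr)d\tau\\
&+\int_0^t\sum_{i=1}^d v_i(\tau)\bigl(X_i(\gamma_v(\tau))-X_i(\gamma_w(\tau))\bigr)d\tau\\
&+\int_0^t\sum_{i=1}^d\bigl(v_i(\tau)-w_i(\tau)\bigr)X_i(\gamma_w(\tau))\,d\tau.
\end{align}
The first two summands are estimated by the Lipschitz constant $C$ of the $X_i$, yielding a contribution bounded by $C\int_0^t(1+\sum_i|v_i(\tau)|)\|\gamma_v(\tau)-\gamma_w(\tau)\|d\tau$. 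The last summand is estimated via $\|X_i\|_\infty\leq L$ and H\"older's inequality by $L\,d\,T^{1/q}\|v-w\|_p$, where $q=p/(p-1)$. So one obtains
\begin{equation}
\|\gamma_v(t)-\gamma_w(t)\|\leq L\,d\,T^{1/q}\|v-w\|_p+C\int_0^t\Bigl(1+\sum_{i=1}^d|v_i(\tau)|\Bigr)\|\gamma_v(\tau)-\gamma_w(\tau)\|d\tau.
\end{equation}

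At this point I would apply the Gronwall inequality stated at the beginning of the Appendix with $B(t)=L\,d\,T^{1/q}\|v-w\|_p$ and $\alpha(\tau)=C(1+\sum_i|v_i(\tau)|)$ to conclude
\begin{equation}
\|\gamma_v(t)-\gamma_w(t)\|\leq L\,d\,T^{1/q}\|v-w\|_p\,\exp\!\left(C T+C\int_0^T\sum_{i=1}^d|v_i(\tau)|\,d\tau\right).
\end{equation}
A final application of H\"older's inequality shows $\int_0^T\sum_i|v_i|d\tau\leq d\,T^{1/q}\|v\|_p$, and because $U$ is bounded in $L^p$ the right-hand side is bounded by a constant $C(U)\|v-w\|_p$, uniform in $v,w\in U$ and $t\in[0,T]$, proving the claim.

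The only mildly subtle point, and probably the main obstacle, is guaranteeing the global Lipschitz property of the $X_i$ uniformly on the region swept out by all trajectories $\gamma_v$ with $v\in U$: this is what forces the preliminary reduction (cutoff/chart argument) and the compactness observation obtained from Theorem \ref{thm:unif_conv}. Once that is in place, everything reduces to the standard Gronwall-plus-H\"older template used throughout this appendix.
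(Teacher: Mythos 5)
Your proposal is correct and follows essentially the same route as the paper's own proof: the same decomposition of $\dot\gamma_v-\dot\gamma_w$ into a drift term, a Lipschitz term and a control-difference term (differing only in which of $v,w$ multiplies the Lipschitz difference), the same use of Theorem \ref{thm:unif_conv} to confine all trajectories with controls in $U$ to a common compact set, and the same H\"older-plus-Gronwall conclusion giving a constant depending only on the $L^p$-bound for $U$.
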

	
	\begin{proof}
		Using \eqref{eqn:1} we derive the following estimate
		\begin{align}
		 \label{eqn:5}
		 \|\gamma_v(t)-\gamma_w(t)\|&\leq \sum_{i=1}^d \int_0^t|v_i-w_i|\|X_i(\gamma_v(s))\|ds+\int_0^t\|X_0(\gamma_v(s))-X_0(\gamma_w(s))\|ds\\
		 &+\sum_{i=1}^d\int_0^t|w_i|\|X_i(\gamma_v(s))-X_i(\gamma_w(s))\|ds.
		\end{align}
		Theorem \ref{thm:unif_conv} ensures that $\gamma_v$ and $\gamma_w$ take values in a compact $K$ which depends just on $U$\footnote{By Banach Alaoglu $U$ is sequentially weakly compact, hence weakly compact by the Eberlein Smulian theorem. On the other hand theorem \ref{thm:unif_conv} implies that for any $\varepsilon>0$, whenever $u,v$ belong to a sufficiently small open set, $\|\gamma_u(t)-\gamma_v(t)\|\leq \varepsilon$ on $[0,T]$. The statement follows since whenever we cover $U$ with a collection of open sets of arbitrary small size, we may always extract a finite subcover and then proceed via the triangular inequality.}; as $X_0,X_1,\dotso,X_d$ are smooth, we have the existence of a constant $M$ such that for all $v,w\in U$ and for all $i\in 1,\dotso,d$ there holds
		\begin{align}
		  \|X_i(\gamma_v)\|&\leq M,\\
		  \|X_i(\gamma_v)-X_i(\gamma_w)\|&\leq M\|\gamma_v-\gamma_w\|,\quad\forall t\in[0,T];
		\end{align}
		lastly we may assume that $U$ is contained in a ball of radius $R$, that is $\|w\|_p\leq R$ for all $w\in U$. We proceed with the estimate in \eqref{eqn:5} as
		\be
		  \label{eqn:6}
		  \|\gamma_v(t)-\gamma_w(t)\|\leq B\|v-w\|_p+M\int_0^t(1+\sum_{i=1}^d|w_i|)\|\gamma_v(s)-\gamma_w(s)\|ds,\quad\forall t\in[0,T],
		\ee
		where $B=MT^{1/q}$; finally, Gronwall inequality yields
		\be\label{eqn:7}\|\gamma_v(t)-\gamma_w(t)\|\leq Be^{M(T+RT^{1/q})}\|v-w\|_p,\quad\forall t\in[0,T].\ee
	\end{proof}
	
	We fix now some notations used in the next theorem: let $A_u(t)=dX_0(\gamma_u)+\sum_{i=1}^du_idX_i(\gamma_u)$, $B_u(t)=(X_1(\gamma_u),\dotso,X_d(\gamma_u))$, and let $M_u$ be the $n\times n$ matrix solution of $M_u'=A_uM_u$ satisfying $M_u(0)=I$; we have
	
	\begin{thm}[Differentiability of the endpoint map]
		\label{thm:differentiability}
		The endpoint map $F$ is $L^p$-Fr\'echet differentiable; its differential at $u$ is the linear map $dF(u):L^p\to\R^n$ defined by
		\be (d_uF)v=\int_0^TM_u(T)M_u(s)^{-1}B_u(s)v(s)ds.\ee
	\end{thm}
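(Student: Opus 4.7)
The plan is to set up the candidate differential via the variation-of-constants formula for the linearized equation and then estimate the remainder using a Taylor expansion of the vector fields combined with Gronwall's inequality, leaning on the a priori estimate $\|\gamma_{u+v}-\gamma_u\|_\infty \le C\|v\|_p$ established in the preceding proposition.

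First I would localize: work on a small interval $[0,T]$ inside a coordinate chart where $X_0,X_1,\dotso,X_d$ are compactly supported, hence globally Lipschitz and of bounded $C^2$ norm. Fix $u$ in the domain of $F$, a bounded $L^p$-neighborhood $U$ of $u$, and test directions $v$ small enough that $u+v\in U$. Define
\[
z(t)=\int_0^t M_u(t)M_u(s)^{-1}B_u(s)v(s)\,ds,
\]
which by construction is the unique solution of the linearized Cauchy problem $\dot z=A_u z+B_u v$, $z(0)=0$. The claimed formula for $(d_uF)v$ is $z(T)$, so the task reduces to showing that the remainder
\[
\eta(t):=\gamma_{u+v}(t)-\gamma_u(t)-z(t)
\]
satisfies $\|\eta(T)\|=o(\|v\|_p)$ as $\|v\|_p\to 0$.

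Next I would subtract the integral equations for $\gamma_u$, $\gamma_{u+v}$, and $z$, and Taylor-expand each vector field around $\gamma_u(t)$ as
\[
X_j(\gamma_{u+v}(t))-X_j(\gamma_u(t))=dX_j(\gamma_u(t))\big(\gamma_{u+v}(t)-\gamma_u(t)\big)+R_j(t),
\]
with $\|R_j(t)\|\le C\|\gamma_{u+v}(t)-\gamma_u(t)\|^2\le C'\|v\|_p^2$ by the preceding proposition. After cancelling the linearized contribution $A_u z+B_u v$, the remainder satisfies
\[
\eta(t)=\int_0^t A_u(s)\eta(s)\,ds+\int_0^t\Big(R_0(s)+\sum_{i=1}^d u_i(s)R_i(s)+\sum_{i=1}^d v_i(s)\big[X_i(\gamma_{u+v}(s))-X_i(\gamma_u(s))\big]\Big)\,ds.
\]
The key estimate is that the inhomogeneity has sup-norm $O(\|v\|_p^2)$: the $R_0$ term is bounded by $CT\|v\|_p^2$ pointwise; the $u_iR_i$ terms are handled via H\"older as $\|u_i\|_p\|R_i\|_q\le CT^{1/q}\|u\|_p\|v\|_p^2$; and the bilinear cross term is estimated, using the Lipschitz bound on $X_i$ together with the a priori $O(\|v\|_p)$ control of $\|\gamma_{u+v}-\gamma_u\|_\infty$, by
\[
\Big\|\int_0^t v_i(s)\big[X_i(\gamma_{u+v}(s))-X_i(\gamma_u(s))\big]\,ds\Big\|\le CT^{1/q}\|v\|_p\cdot\|\gamma_{u+v}-\gamma_u\|_\infty\le C'T^{1/q}\|v\|_p^2.
\]
Since $\|A_u(\cdot)\|\le C(1+\sum_i|u_i(\cdot)|)\in L^1([0,T])$, the Gronwall inequality from the appendix then yields $\|\eta(T)\|\le C''\|v\|_p^2=o(\|v\|_p)$, which is Fr\'echet differentiability with differential $z(T)$.

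The main technical obstacle I anticipate is precisely the bilinear cross term $\int v_i\,(X_i\circ\gamma_{u+v}-X_i\circ\gamma_u)\,ds$: it becomes genuinely second order in $v$ only after one combines the Lipschitz estimate for the $X_i$ with the preliminary $O(\|v\|_p)$ control on $\|\gamma_{u+v}-\gamma_u\|_\infty$ from the previous proposition, and one has to be slightly careful in applying H\"older with exponents $(p,q)$ so that the $\|v\|_p$ is absorbed on the correct side. Once this bound is in place, the rest is the standard Gronwall closing argument, and the only other subtlety is that the coefficient $A_u$ is merely integrable on $[0,T]$ rather than bounded, which is exactly the generality under which the Gronwall inequality of the appendix has been stated.
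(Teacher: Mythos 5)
Your proposal is correct and follows essentially the same route as the paper: second-order Taylor expansion of the vector fields along $\gamma_u$, the a priori bound $\|\gamma_{u+v}-\gamma_u\|_\infty\le C\|v\|_p$ from the preceding proposition, and H\"older estimates showing the remainder (including the bilinear cross term $\sum_i v_i(X_i\circ\gamma_{u+v}-X_i\circ\gamma_u)$) is $O(\|v\|_p^2)$. The only cosmetic difference is that you close the argument by applying Gronwall to $\eta=\omega-z$, whereas the paper writes the explicit Duhamel formula for $\omega$ and bounds the term $\int_0^T M_u(T)M_u(s)^{-1}\xi(s)\,ds$ directly; the two are equivalent.
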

	
	\begin{proof}
		Let $u\in L^p([0,T],\R^d)$ be fixed in the domain of $F$. Let us consider a neighborhood $U$ of $u$ in $L^p$; without loss of generality we may assume that there exists $R>0$ such that $\|v\|_p\leq R$ for any $v\in U$. Let $\gamma_u$ and $\gamma_{u+v}$ be the solutions to \eqref{eqn:1} with respect to the controls $u$ and $u+v$ respectively. We have
		\be\label{eqn:8}\dot\gamma_{u+v}-\dot\gamma_u=X_0(\gamma_{u+v})-X_0(\gamma_u)+\sum_{i=1}^dv_iX_i(\gamma_{u+v})+\sum_{i=1}^du_i(X_i(\gamma_{u+v})-X_i(\gamma_u)).\ee
		For all $i=0,1,\dotso,d$ there hold the expansions
		\begin{align}
		  X_i(\gamma_{u+v})-X_i(\gamma_u)&=dX_i(\gamma_u)(\gamma_{u+v}-\gamma_u)\\&+\int_0^1(1-t)d^2X_i(t\gamma_u+(1-t)\gamma_{u+v})(\gamma_{u+v}-\gamma_u,\gamma_{u+v}-\gamma_u)dt,\\X_i(\gamma_{u+v})&=X_i(\gamma_u)+\int_0^1(1-t)dX_i(t\gamma_u+(1-t)\gamma_{u+v})(\gamma_{u+v}-\gamma_u)dt;
		\end{align}
		plug the above into \eqref{eqn:8} to rewrite that equation as \be\label{eqn:9}\dot\omega=A_u\omega+B_uv+\xi,\ee
		where $\omega(t)=\gamma_{u+v}(t)-\gamma_u(t)$ and
		\begin{align}
		\xi(t)&=\sum_{i=1}^dv_i(t)\int_0^1(1-s)dX_i(s\gamma_u+(1-s)\gamma_{u+v})(\gamma_{u+v}-\gamma_u)ds\\&+\int_0^1(1-s)d^2X_0(s\gamma_u+(1-s)\gamma_{u+v})(\gamma_{u+v}-\gamma_u,\gamma_{u+v}-\gamma_u)ds\\&+\sum_{i=1}^du_i(t)\int_0^1(1-s)d^2X_i(s\gamma_u+(1-s)\gamma_{u+v})(\gamma_{u+v}-\gamma_u,\gamma_{u+v}-\gamma_u)ds.
		\end{align}
		We have $\|v\|_p\leq R$ for all $v\in U$; the previous proposition and the estimate
		\be\|s\gamma_u(s)+(1-s)\gamma_{u+v}(s)\|\leq \|\gamma_u(s)\|+(1-s)\|\gamma_{u+v}(s)-\gamma_u(s)\|\leq \|\gamma_u(s)\|+CR\ee
		imply that there exists a compact $K\subset\R^n$ such that $s\gamma_u(s)+(1-s)\gamma_{u+v}(s)\in K$ for any $s\in[0,1]$ and any $v\in U$. Since the $X_i$ are smooth, again by the proposition above we have we the estimate
		\be\|\xi(t)\|\leq c_1\|v\|_p\sum_{i=1}^d|v_i(t)|+c_2\|v\|_p^2(1+\sum_{i=1}^d|u_i(t)|).\ee
		We solve \eqref{eqn:9} to obtain
		\be\omega(t)=\int_0^tM_u(t)M_u(s)^{-1}B_u(s)v(s)ds+\int_0^tM_u(t)M_u(s)^{-1}\xi(s)ds;\ee
		in particular for $t=T$
		\begin{align}
		  \label{eqn:10}
		  \bigg\|\gamma_{u+v}(T)-\gamma_u(T)&-\int_0^TM_u(T)M_u(s)^{-1}(s)B_u(s)v(s)ds\bigg\|\\&\leq C'\left(c_1\|v\|_p\int_0^T\sum_{i=1}^d|v_i(s)|ds+c_2\|v\|_p^2\int_0^T(1+\sum_{i=1}^d|u_i(s)|)ds\right)\\&\leq C'\left(c_1T^{1/q}+c_2(T+\|u\|_pT^{1/q})\right)\|v\|_p^2.
		\end{align}
		The map
		\be \mathcal F_u:L^p\ni v\mapsto\int_0^TM_u(T)M_u(s)^{-1}B_u(s)v(s)ds\in\R^n\ee
		is evidently linear and by \eqref{eqn:10} also continuous. It then follows that the endpoint map $F$ is differentiable at $u$ and $d_uFu=\mathcal F_u$.
	\end{proof}
	
	\begin{thm}
		\label{thm:unif_conv_diff}
		Let $u=(u_1,\dotso,u_d)\in L^p([0,T],\R^d)$ be a control in the domain of the endpoint map $F$. Let $(u_n)_{n\in\mathbb N}$ be a sequence in $L^p([0,T],\R^d)$ such that $u_n\stackrel{L^p}{\rightharpoonup} u$ for some $u\in L^p([0,T],\R^d)$. Then $d_{u_n}F\to d_uF$.
	\end{thm}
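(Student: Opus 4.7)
The plan is to establish uniform (hence $L^q$) convergence of the kernel $K_n(s) = M_{u_n}(T)M_{u_n}(s)^{-1}B_{u_n}(s)$ to $K(s) = M_u(T) M_u(s)^{-1} B_u(s)$ on $[0,T]$; then H\"older's inequality promotes this immediately to operator-norm convergence
\[
\|d_{u_n}F - d_uF\|_{L^p \to \R^n} \leq \|K_n - K\|_q \longrightarrow 0.
\]

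First I would apply Theorem \ref{thm:unif_conv} to get $\gamma_{u_n} \to \gamma_u$ uniformly on $[0,T]$. Smoothness of the $X_i$ then gives $B_{u_n} \to B_u$ uniformly on $[0,T]$, and once uniform convergence $M_{u_n} \to M_u$ is also established, the invertibility of $M_u$ on $[0,T]$ (together with compactness) forces $M_{u_n}^{-1} \to M_u^{-1}$ uniformly, so that $K_n \to K$ uniformly; this is the endgame.

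The main obstacle is showing $M_{u_n} \to M_u$ uniformly, because $u_n$ enters the coefficient $A_{u_n} = dX_0(\gamma_{u_n}) + \sum_i u_{n,i}\, dX_i(\gamma_{u_n})$ and converges only \emph{weakly}. Writing the Duhamel identity for the difference,
\[
M_{u_n}(t) - M_u(t) = \int_0^t A_{u_n}(s)\bigl(M_{u_n}(s) - M_u(s)\bigr)\,ds + \int_0^t (A_{u_n} - A_u)(s)\, M_u(s)\,ds,
\]
and using that $\|A_{u_n}\|_{L^1}$ is uniformly bounded (by H\"older and $\sup_n\|u_n\|_p <\infty$, which follows from weak convergence via Banach--Steinhaus), Gronwall reduces the task to showing that $\varepsilon_n(t) := \bigl\|\int_0^t (A_{u_n} - A_u)(s) M_u(s)\,ds\bigr\|$ tends to zero \emph{uniformly} in $t$. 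Decompose
\[
A_{u_n} - A_u = \underbrace{\bigl(dX_0(\gamma_{u_n}) - dX_0(\gamma_u)\bigr)}_{(I_n)} + \underbrace{\sum_i u_{n,i}\bigl(dX_i(\gamma_{u_n}) - dX_i(\gamma_u)\bigr)}_{(II_n)} + \underbrace{\sum_i (u_{n,i} - u_i)\, dX_i(\gamma_u)}_{(III_n)}.
\]
The term $(I_n)$ tends to $0$ in $L^\infty$; for $(II_n)$ the boundedness of $\|u_n\|_p$ combined with the uniform smallness of $dX_i(\gamma_{u_n}) - dX_i(\gamma_u)$ gives $\|(II_n)\|_p \to 0$, and H\"older against the bounded $M_u$ provides the uniform estimate. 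The delicate summand is $(III_n)$: set $g_n(t) := \int_0^t (u_{n,i} - u_i)(s) \bigl(dX_i(\gamma_u) M_u\bigr)(s)\,ds$. Weak convergence of $u_n$ against the continuous function $\mathbf 1_{[0,t]}\,dX_i(\gamma_u) M_u \in L^q$ yields $g_n(t) \to 0$ for each $t$, while
\[
|g_n(t_1) - g_n(t_2)| \leq \|u_n - u\|_p\, \|dX_i(\gamma_u) M_u\|_\infty\, |t_1 - t_2|^{1/q}
\]
shows $\{g_n\}$ is uniformly $1/q$-H\"olderian. The key upgrade is then exactly Lemma \ref{lemma:unif_conv_hol} (pointwise convergence of a uniformly H\"older sequence is uniform), giving $g_n \to 0$ uniformly on $[0,T]$. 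Summing the three contributions proves $\varepsilon_n(T) \to 0$, Gronwall closes Step~2, and the $L^q$--H\"older argument sketched above concludes the proof.
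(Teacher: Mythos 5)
Your argument is correct and follows essentially the same route as the paper: uniform convergence of the trajectories from Theorem \ref{thm:unif_conv}, the three-term decomposition of $A_{u_n}-A_u$ with the weak-convergence term upgraded to uniform convergence via the H\"older-equicontinuity Lemma \ref{lemma:unif_conv_hol}, and then Gronwall. The only (harmless) difference is that you run Gronwall on $M_{u_n}-M_u$ and afterwards invert, whereas the paper's Lemma \ref{lemma:6} applies the identical argument directly to $N_u(s)=M_u(T)M_u(s)^{-1}$, which satisfies its own linear ODE and so avoids the extra inversion step.
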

	
	The proof of this theorem needs a series of preliminary lemmas; for $s\in[0,T]$, set $N_u(s)=M_u(T)M_u(s)^{-1}$. Since $N_u(s)M_u(s)=M_u(T)$, upon differentiation and using the definition of $M_u$, we obtain $N'_u(s)M_u(s)+N_u(s)A_u(s)M_u(s)=0$, that is
	\be\label{eqn:11}N'_u(s)=-N_u(s)A_u(s),\quad N_u(T)=I.\ee
	
	\begin{lemma}
		\label{lemma:6}
		Let $\{u_n\}_{n\in\mathbb N}$ and $u$ be as in the statement of theorem \ref{thm:unif_conv_diff}. Then $N_{u_n}\to N_u$ uniformly on $[0,T]$.
	\end{lemma}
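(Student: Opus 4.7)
The plan is to combine an Arzel\`a--Ascoli compactness argument with a weak--strong convergence trick inside the integral form of \eqref{eqn:11}, and then to identify the limit by uniqueness of continuous solutions to that linear equation. First, by the Banach--Steinhaus theorem the norms $\|u_n\|_p$ are uniformly bounded by some $R>0$, and since $\|A_v(\tau)\|\le C(1+\sum_i|v_i(\tau)|)$, H\"older's inequality yields a uniform $L^1$-bound $\|A_{u_n}\|_1\le M:=CT+CdRT^{1/q}$. Applying Gronwall (in reversed time) to the integral inequality $\|N_{u_n}(s)\|\le 1+\int_s^T\|N_{u_n}(\tau)\|\,\|A_{u_n}(\tau)\|\,d\tau$ then gives a uniform sup-norm bound $\|N_{u_n}\|_\infty\le e^M$.

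From the same integral equation and H\"older's inequality I also obtain the equicontinuity estimate
\[
\|N_{u_n}(s_1)-N_{u_n}(s_2)\|\le e^M\left(C|s_1-s_2|+CdR\,|s_1-s_2|^{1/q}\right),
\]
so the family $\{N_{u_n}\}_{n\in\N}$ is equicontinuous and uniformly bounded on $[0,T]$. By Arzel\`a--Ascoli, from any subsequence of $\{N_{u_n}\}$ I can extract a further subsequence $\{N_{u_{n_k}}\}$ converging uniformly to some continuous $\tilde N:[0,T]\to\R^{n\times n}$.

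To identify $\tilde N$, I pass to the limit in
\[
N_{u_{n_k}}(s)=I+\int_s^T N_{u_{n_k}}(\tau)\,dX_0(\gamma_{u_{n_k}}(\tau))\,d\tau+\sum_{i=1}^d\int_s^T u_{n_k,i}(\tau)\,N_{u_{n_k}}(\tau)\,dX_i(\gamma_{u_{n_k}}(\tau))\,d\tau.
\]
By Theorem \ref{thm:unif_conv} the trajectories $\gamma_{u_{n_k}}$ converge uniformly to $\gamma_u$, and combined with $N_{u_{n_k}}\to \tilde N$ uniformly and the smoothness of the vector fields, each function $\tau\mapsto N_{u_{n_k}}(\tau)\,dX_i(\gamma_{u_{n_k}}(\tau))$ converges uniformly, and therefore strongly in $L^q$, to $\tau\mapsto \tilde N(\tau)\,dX_i(\gamma_u(\tau))$. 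Coupled with the hypothesis $u_{n_k,i}\rightharpoonup u_i$ in $L^p$, the standard strong-$L^q$ times weak-$L^p$ pairing (applied on $[s,T]$) sends every summand above to the corresponding integral for $\tilde N$ and $u$, so $\tilde N(s)=I+\int_s^T\tilde N(\tau)A_u(\tau)\,d\tau$. Uniqueness of continuous solutions to this linear Volterra equation forces $\tilde N=N_u$, and the usual subsequence principle then upgrades this to uniform convergence of the full sequence $N_{u_n}$ to $N_u$.

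The main obstacle is precisely controlling the weakly-convergent third batch of terms: the multiplier $N_{u_{n_k}}\,dX_i(\gamma_{u_{n_k}})$ itself depends on $n_k$, so a naive pointwise argument would pair two merely bounded sequences with no a priori control on the product. The Arzel\`a--Ascoli extraction is what turns this multiplier into a strongly convergent factor in $L^q$, after which the weak convergence of the controls can be absorbed cleanly; this is why establishing the uniform bound and the equicontinuity \emph{before} attempting to identify the limit is essential.
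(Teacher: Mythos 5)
Your proof is correct, but it follows a genuinely different route from the paper's. The paper argues directly on the difference $N_u-N_{u_n}$: it decomposes the integrand of $\int_0^t(N_{u_n}A_{u_n}-N_uA_u)$ into five pieces, isolates the single term $h_n(t)=\int_0^t N_u(s)\sum_i(u_{n,i}(s)-u_i(s))\,dX_i(\gamma_u(s))\,ds$ whose multiplier is \emph{independent of $n$} (so that weak convergence gives pointwise convergence to zero, upgraded to uniform convergence by the equi-H\"older Lemma \ref{lemma:unif_conv_hol}), absorbs the terms containing the factor $N_{u_n}-N_u$ into a Gronwall inequality with the uniformly $L^1$-bounded weight $1+\sum_i|u_{n,i}|$, and handles the remaining terms by the uniform convergence $\gamma_{u_n}\to\gamma_u$. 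You instead take the soft route: uniform bound and equicontinuity of $\{N_{u_n}\}$, Arzel\`a--Ascoli, identification of every subsequential limit via the strong-$L^q$/weak-$L^p$ pairing in the integral equation, uniqueness for the linear Volterra equation, and the subsequence principle. The key difficulty is the same in both proofs -- the term pairing the weakly convergent controls against an $n$-dependent multiplier -- but you resolve it by first forcing the multiplier to converge strongly (via compactness), whereas the paper resolves it by rearranging the decomposition so that the weak convergence only ever meets a fixed multiplier. Your argument is arguably cleaner conceptually and avoids the slightly delicate bookkeeping of the five-term decomposition, at the cost of being non-quantitative (no explicit rate from Gronwall) and of invoking compactness and a uniqueness step that the direct estimate does not need. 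Both proofs use the same external inputs (Banach--Steinhaus, Theorem \ref{thm:unif_conv}, Gronwall), and your equicontinuity estimate correctly exploits $q<\infty$, consistent with the appendix's standing assumption $p\in(1,\infty)$.
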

	
	\begin{proof}
		\begin{alignat}{9}
		N_u(t)-N_{u_n}(t)&=\int_0^t\bigg(&& N_{u_n}(s)(dX_0(\gamma_{u_n}(s))+\sum_{i=1}^du_{n,i}dX_i(\gamma_{u_n}(s)))\\		 \label{eqn:bla}
&&&-N_u(s)(dX_0(\gamma_u(s))+\sum_{i=1}^du_i(s)dX_i(\gamma_u(s)))\bigg)ds\\&=\int_0^t\bigg(&&( N_{u_n}(s)-N_u(s))dX_0(\gamma_u(s))+N_u(s)(dX_0(\gamma_{u_n}(s))-dX_0(\gamma_u(s)))
		 \\&&&+(N_{u_n}(s)-N_u(s))\sum_{i=1}^du_{n,i}(s)dX_i(\gamma_{u_n}(s))\\&&&+N_u(s)\sum_{i=1}^du_{n,i}(s)(dX_i(\gamma_{u_n}(s))-dX_i(\gamma_u(s)))\\&&&+N_u(s)\sum_{i=1}^d(u_{n,i}(s)-u_i(s))dX_i(\gamma_u(s))\bigg)ds.
		\end{alignat}
		By virtue of theorem \ref{thm:unif_conv}, $\gamma_{u_n}\to\gamma_u$ uniformly on $[0,T]$; moreover if
		\be h_n(t)=\int_0^1N_u(s)\sum_{i=1}^d(u_{n,i}(s)-u_i(s))dX_i(\gamma_u(s))ds,\ee
		then $\|h_n\|\to 0$ uniformly on $[0,T]$ by lemma \ref{lemma:unif_conv_hol}: indeed the sequence $\{h_n\}_{n\in\mathbb N}$ is $1/q$- H\"olderian and converges pointwise to $0$, moreover the factor $N_u(s)$ does not depend on $n$. Then \eqref{eqn:bla} can be estimated for $n$ sufficiently large as 
		\be\|N_u(t)-N_{u_n}(t)\|\leq C\int_0^t\|N_u(s)-N_{u_n}(s)\|ds+\varepsilon,\ee and the theorem follows using the Gronwall inequality, as desired. 
	\end{proof}
	
	\begin{proof}[Proof of theorem \ref{thm:unif_conv_diff}]
		 Theorem \ref{thm:differentiability} yields that the differential of the endpoint map at the point $w$ has the form
		\be (d_wF)v=\int_0^TN_w(s)B_w(s)v(s)ds.\ee 
		We know from theorem \ref{thm:unif_conv} that $\gamma_{u_n}\to\gamma_u$ uniformly on $[0,T]$; then $B_{u_n}\to B_u$ uniformly on $[0,T]$. As lemma \ref{lemma:6} shows that also $N_{u_n}\to N_u$ uniformly on $[0,T]$, we deduce that 
		\be d_{u_n}Fv\to d_uFv\ee
		uniformly on $[0,T]$, for any $v\in L^p([0,T],\R^d)$, and this finishes the proof.
    \end{proof}

%\subsection{The case $(2,4)$}\comm{Study the generic case}

\end{document}